\newtheorem{thm}{Theorem}[section]
\newtheorem{lem}[thm]{Lemma}
\newtheorem{lemma}[thm]{Lemma}
\newtheorem{prop}[thm]{Proposition}
\newtheorem{cor}[thm]{Corollary}
\newtheorem{corollary}[thm]{Corollary}
\newcommand{\N}{\mathbb{N}}
\newcommand{\Z}{\mathbb{Z}}
\renewcommand{\P}{\mathbb P}
\newcommand{\C}{\mathbb C}
\newcommand{\EX}{\mathbb E}
\newcommand{\D}{\mathbb D}
\newcommand{\Q}{\mathbb Q}
\newcommand{\<}{\leq}
\renewcommand{\>}{\geq}
\newcommand{\2}{\alpha}
\newcommand{\3}{\beta}
\renewcommand{\r}{\gamma}
\newcommand{\e}{\varepsilon}
\newcommand{\8}{\infty}
\newcommand{\0}{\theta}
\renewcommand{\O}{\varnothing}
\newcommand{\6}{\partial}
\renewcommand{\-}{\setminus}
\newcommand{\w}{\omega}
\renewcommand{\d}{\delta}
\renewcommand{\k}{\kappa}
\renewcommand{\t}{\tau}
\renewcommand{\a}{\sigma}
\newcommand{\inn}{\subseteq}
\renewcommand{\~}{\tilde}
\DeclareMathOperator*{\esup}{esup}
\newcommand\blfootnote[1]{%
  \begingroup
  \renewcommand\thefootnote{}\footnote{#1}%
  \addtocounter{footnote}{-1}%
  \endgroup
}
\begin{document}

\title{Liouville heat kernel upper bounds at large distances}
\author{Yang Yu
	\thanks{University of Washington, Seattle, WA, USA; \texttt{yuy10@uw.edu}}}
\date{}

\maketitle

\begin{abstract}
We show that the Liouville heat kernel decays fast at large distances. In particular, the Liouville semigroup $T_t$ is $C_0$-Feller, where $C_0$ is the space of real-valued continuous functions on $\C$ vanishing at infinity. This is a problem mentioned in the paper \cite{andres2016continuity}.
\end{abstract}

\begin{NoHyper}
\blfootnote{2020 Mathematics Subject Classification. Primary 60J35, 60K37; Secondary 60J60, 60G15.}
\end{NoHyper}

\section{Introduction}

Liouville quantum gravity (LQG) was introduced by Polyakov in a seminal paper \cite{polyakov1989quantum} and can be considered as the canonical 2-dimensional random Riemannian manifold. The Riemannian volume form can be  formally written in the form
$$e^{\r X(z)}dz$$
where $X$ is a massive Gaussian free field (GFF) on $\C$; $\r\in(0, 2)$ is a parameter; and $dz$ is the Lebesgue measure on $\C$. 

Of course the above form is not rigorous as the GFF is not a random function (but a distribution in the sense of Schwartz). Nonetheless, one can make sense of the volume form by the theory of Gaussian multiplicative chaos \cite{kahane1985chaos} or some other regularization procedure \cite{duplantier2011liouville}. The rigorous construction of the random volume form is then referred as the Liouville measure $M_\r$.

The Liouville Brownian motion (LBM) is the canonical diffusion process for Liouville quantum gravity, which is constructed in \cite{garban2016liouville, Berestycki2013DiffusionIP} as a time-changed Brownian motion of 2-dimension according to the Liouville measure (independent of the Brownian motion). More precisely, for the Liouville measure $M_\r$ one can construct the associated positive continuous additive functional (PCAF) $F$ of a Brownian motion $W$ which can be formally written as
$$F_t=\int_0^t e^{\r X(W_s)}\,ds.$$
Then the LBM $\{Y_t\}_{t\>0}$ as a stochastic process is defined by 
$Y_t:=W_{F^{-1}_t}$, where $F^{-1}$ is the inverse of $F$ (the inverse exists). For the rigorous discussion of LBM one can refer to \cite{garban2016liouville, Berestycki2013DiffusionIP,fukushima2010dirichlet} or see \Cref{setup} of this paper.

The heat kernel of Liouville Brownian motion (LHK) is constructed in \cite{rhodes2014}. Further properties of LHK are studied in \cite{andres2016continuity,maillard2016liouville,ding2019heat}. However none of them indicates large distance behavior of LHK.

In \cite{garban2016liouville} it is shown that the semigroup $T_t$ of Liouville Brownian motion is weak Feller, meaning that the semigroup operator $T_t$ maps bounded continuous functions to bounded continuous functions. In \cite{rhodes2014} they show $T_t$ is strong Feller, meaning that $T_t$ maps bounded Borel measurable functions to continuous functions. But it is not clear whether it is $C_0$-Feller. That is, we don't know $T_t(C_0)\inn C_0$, where $C_0$ is the space of continuous functions vanishing at infinity (it is also mentioned in \cite[Remark 2.3]{andres2016continuity}). This is one of the motivations for this paper.

In this paper we show that the LHK decays fast at large distances (\Cref{mainthm}), which immediately implies $C_0$-Feller property. We also attach in Appendix a simple proof of Feller property without using estimates of LHK.

\section{Background and preliminaries}\label{setup}

\subsection{The massive Gaussian free field and the Liouville measure}

Given a real number $m>0$, the whole-plane massive Gaussian free field (MGFF) (see \cite{sheffield2007gaussian} for more information about Gaussian free field) $X$ is a centered Gaussian random distribution (in the sense of Schwartz) with covariance function given by the Green function of the operator $m^2-\Delta$, that is,

$$\EX[X(x)X(y)]=G_{m}(x, y)=\int_{0}^{\infty} e^{-\left(m^{2} / 2\right) u-|x-y|^{2} /(2 u)} \frac{d u}{2 u} \quad  \text{ for all } x,y\in \C.$$
Note that $G_m(x,y)$ can be written as $$G_{m}(x, y)=\int_{1}^{+\infty} \frac{k_{m}(u(x-y))}{u} d u$$
where $k_{m}(z)=\frac{1}{2} \int_{0}^{\infty} e^{-\frac{m^{2}}{2 s}|z|^{2}-\frac{s}{2}} d s$ is a continuous covariance kernel
(see \cite{allez2013lognormal} for details about this expression). 
This expression helps us to decompose $X$ into a sum of good Gaussian fields.

We then introduce the $n$-regularized field $X_n$. For this purpose, let $\{c_n\}_{n\in\N}$ be a strictly increasing sequence of real numbers starting from $c_0=1$ and satisfying $\lim_{n\to\8}c_n=\8$. Let $(\eta_n)_{n\geq1}$ be a family of independent continuous Gaussian fields on $\C$ with covariance 
$$\EX[\eta_n(x)\eta_n(y)]=\int_{c_{n-1}}^{c_n} \frac{k_{m}(u(x-y))}{u} d u\quad  \text{ for all } x,y\in \C.$$
Note that for each $n$ we can choose $\eta_n$ to be continuous in space by applying Kolmogorov continuity theorem (\cite[Theorem 2.23]{kallenberg2006foundations}).
Define $X_n:=\sum_{k=1}^n \eta_k$, and the associated random Radon measure $M_n=M_{\r,n}$ on $\C$ by 
$$M_{n, \gamma}(d z)=\exp \left(\gamma X_{n}(z)-\frac{\gamma^{2}}{2} \EX\left[X_{n}(z)^{2}\right]\right) d z, \quad \gamma \in[0,\8)$$
where $dz$ is the Lebesgue measure on $\C$. By Kahane's theory of multiplicative chaos \cite{kahane1985chaos} almost surely $M_n$ converges vaguely toward a limit Radon measure $M$, which is called the Liouville measure. The law of the limit does not depend on the choice of ${c_n}$ and the limit measure is nontrivial if and only if $\r\in[0,2)$. 

Recall (see \cite{kahane1985chaos,rhodes2014GMC}) that the Liouville measure has an important property that for any bounded Borel set $A$ and $p\in (-\8,4/\r^2)$ we have $\EX [M(A)^{p}]<\8$ and that 
$$\sup _{r\in(0,1]} r^{-\xi_{M}(p)} \mathbb{E}\left[M(r A)^{p}\right] \leq C_{p}$$
for some constant $C_p$ only depending on $p$, $\text{diam}A(:=\sup_{x,y\in A}|x-y|)$, $\r$ and $m$, where $\xi_M(q)=-\frac{\r^2}{2}q^2+(2+\frac{\r^2}{2})q$ is the power law spectrum of $M$ (see \cite{allez2013lognormal}).

\subsection {Liouville Brownian motion}
The Liouville Brownian motion is constructed in \cite{garban2016liouville, Berestycki2013DiffusionIP} as the canonical diffusion process under the geometry induced by the measure $M$. More precisely, Let $(\Omega, \mathcal A, \P)$ be the probability space that $(\eta_n)_{n\geq1}$ live on. Let $\Omega':=C([0,\8),\C)$ and $W=(W_t)_{t\geq0}$ be the coordinate procress on $\Omega'$. Set $\mathcal F=\a(W_s,s<\8)$ and $\mathcal F_t=\a(W_s,s\<t)$. Let $\{P_x\}_{x\in\C}$ be the family of probability measures on $(\Omega',\mathcal F)$ such that $W$ under $P_x$ is a Brownian motion on $\C$ starting from $x\in\C$.

For each $n\in\N$ define $F^n(t):\Omega\times\Omega'\to[0,\8)$ to be 
$$F^n(t):=\int_0^t \exp \left(\gamma X_{n}(W_s)-\frac{\gamma^{2}}{2} \EX\left[X_{n}(W_s)^{2}\right]\right) ds, \quad t\geq0.$$
Note that $F^n(t)$ is the positive continuous additive functional (\cite{chen2012symmetric}, \cite{fukushima2010dirichlet}) of $W$ with Revuz measure $M_{\r,n}$.
In \cite[Theorem 2.7]{garban2016liouville} (see also \cite[Theorem 1.2]{Berestycki2013DiffusionIP}) they show that $\P$-a.s. there exists a unique positive continuous additive functional (PCAF) $F=(F(t))_{t\geq 0}$ of $W$ such that the Revuz measure of $F$ is $M$ and
$$\lim_{n\to\8} P_x[\sup_{t\leq T}|F^n(t)-F(t)|>\e]=0\quad \text{ for all }\e>0,T>0, x\in \C. $$
And then the Liouville Brownian motion is defined to be 
$$Y_t=W_{\bar F(t)}$$
where $\bar F(t)=F^{-1}(t)=\inf\{s\geq 0:F(s)>t\}$. Note that it is proved in \cite{garban2016liouville} (see also \cite[Theorem 1.2]{Berestycki2013DiffusionIP}) that $\P$-a.s. for any $x\in\C$, $P_x$-a.s., $F$ is continuous, strictly increasing and diverging to $\8$.

\subsection{Notation}
Throughout this paper, we will fix $\r\in(0,2)$. Define two constants in terms of $\r$ which we will frequently use: $\2_1=\frac{1}{2}(2+\r)^2$, $\2_2=\frac{1}{2}(2-\r)^2$. Let $X$ be a massive GFF on $\C$ and $M=M_\r$ be the Liouville measure constructed from $X$. We write $$\~\xi(q)=-\xi_M(-q)=(2+\frac{\r^2}{2})q+\frac{\r^2}{2}q^2$$
for $q>0$. Let $\{Y_{t}\}_{t\>0}$ be a LBM and $p_t(x,y)$ be its heat kernel w.r.t. the Liouville measure $M$. 

We denote $a\vee b=\max\{a,b\}$ and $a\wedge b=\min\{a,b\}$. Let $B_{x,r}=\{z\in\C:|z-x|\leq r\}$, in particular we write $B_R=\{z\in\C:|z|\leq R\}$. Let $\t_{x,r}=\inf\{t\>0:Y_t\notin B_{x,r}\}$ be the first exit time of LBM of the ball $B_{x,r}$.

The symbols $c,C$ stand for positive constants whose value may change from line to line, but they won't depend on any parameters in this article. By adding subscripts $X,\r,\2,...$ to the symbols $c,C$ we indicate their dependence on those subscripts, while some other symbols $\bar C_R,\hat C_R, C_*,...$ are exclusively used in some propositions or theorems.

We use $\lesssim$ to indicate the inequality holds up to an absolute constant $C>0$, i.e. $x\lesssim y$ if and only if $x\<Cy$ for some $C>0$. By adding subscripts $X,\r,\2,...$ to the symbols $\lesssim$ we indicate dependence of the constant on those subscripts. 

We use $P_x,E_x$ to take the probability (expectation) w.r.t. the Brownian motion starting at $x\in\C$, and use $\P, \EX$ to take the probabitlity (expectation) w.r.t. the massive GFF.

\section{The estimates}

In this section we will establish some estimates of the Liouville heat kernel (LHK).

\subsection{Liouville measure at large distances}

We first do some preparation for estimates of LHK. The following lemma will be used in \Cref{LMbound}, \Cref{liouvillemeasure}, and \Cref{exitmoment}.

\begin{lemma} \label{moment}
	Let $\{Z_R\}_{R\>1}$ be a family of nonnegative random variables that are almost surely  nondecreasing in $R$ such that 
	$$\EX [Z_R^p]\<CR^m\quad \text{ for all } R\>1.$$
	for some positive constants $p,m,C>0$. Then for any $\0>m/p$, almost surely there is a random constant $C_\0>0$ such that $Z_R\<C_\0 R^{\0}$ for all $R\>1$.
\end{lemma}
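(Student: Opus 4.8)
The plan is a routine Borel--Cantelli argument along a geometric grid, using the almost sure monotonicity of $R\mapsto Z_R$ to fill in the gaps between grid points. Fix $\0>m/p$, so that $p\0-m>0$. First I would apply Markov's inequality at the dyadic scales $R=2^n$, $n\>0$:
$$\P\big(Z_{2^n}>2^{n\0}\big)\<\frac{\EX[Z_{2^n}^p]}{2^{np\0}}\<\frac{C\,2^{nm}}{2^{np\0}}=C\,2^{-n(p\0-m)}.$$
Since $p\0-m>0$, the right-hand side is the general term of a convergent geometric series, so $\sum_{n\>0}\P\big(Z_{2^n}>2^{n\0}\big)<\8$. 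By the Borel--Cantelli lemma, almost surely there is a random integer $N\>0$ such that $Z_{2^n}\<2^{n\0}$ for every $n\>N$; in particular $Z_{2^N}\<2^{N\0}<\8$ on this event.

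Next I would interpolate to arbitrary $R\>1$ on that full-measure event. If $R\>2^N$, set $n=\lfloor\log_2 R\rfloor\>N$, so that $2^n\<R<2^{n+1}$; then monotonicity gives
$$Z_R\<Z_{2^{n+1}}\<2^{(n+1)\0}=2^\0\,(2^n)^\0\<2^\0 R^\0.$$
If instead $1\<R<2^N$, monotonicity gives $Z_R\<Z_{2^N}\<2^{N\0}$, and since $R^\0\>1$ this yields $Z_R\<2^{N\0}R^\0$. Hence $Z_R\<C_\0 R^\0$ for all $R\>1$ with the finite random constant $C_\0:=2^\0\vee 2^{N\0}$, which is the assertion.

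The argument is standard and presents no serious obstacle; the one place the hypotheses are actually used --- beyond the moment bound --- is the monotonicity of $Z_R$, which is what allows a sparse (geometric) grid to control every scale, so that a single application of Markov's inequality per grid point already gives summable probabilities precisely when $\0>m/p$. Any base $b>1$ could replace $2$ throughout.
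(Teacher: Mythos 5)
Your proof is correct and follows essentially the same route as the paper: Markov's inequality at the dyadic scales $R_n=2^n$, Borel--Cantelli (which requires exactly $\0>m/p$ for summability), and the almost sure monotonicity of $Z_R$ to pass from the grid to all $R\>1$. The only cosmetic difference is that you treat the finitely many exceptional scales $n<N$ explicitly, while the paper absorbs them into the random constant $C_\0$; the content is identical.
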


\begin{proof}
	Let $R_n=2^{n}$ and for any $\0>0$ define $A_n=\{Z_{R_n}\<R_n^{\0}\}$. Then 
	$$\P[A_n^{c}]\leq R_n^{-\0 p}\EX Z_{R_n}^p\<C R_n^{m-\0 p}\quad \text{ for all } n\>0.$$
	If $\0>m/p$ then by Borel-Cantelli's lemma $\P[A_n^c \text{ i.o.}]=0$ and thus almost surely there is a random constant $C_\0>0$ such that $Z_{R_n}\<C_\0 R_n^\0$ for any $n\geq0$. By monotonicity we have 
	$$Z_R\leq Z_{R_{n+1}}\leq C_\0 R_{n+1}^\0= C_\02^\0 R_n^\0\leq C_\02^\0 R^\0$$
	provided $R_n\leq R\leq R_{n+1}$ for $n\>0$. Reassigning $C_\02^\0$ as $C_\0$ finishes the proof. 
\end{proof}

The following proposition gives the Liouville volume growth rate of Euclidian balls which will be used in \Cref{DUE}.

\begin{prop}\label{LMbound}
	For any $\e>0$, $\P$-a.s. the Liouville measure satisfies
	$$M(B_R)\lesssim_{X,\r,\e}R^{2+\e}\qquad \text{ for all } R\>1.$$
\end{prop}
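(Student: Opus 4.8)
The plan is to apply \Cref{moment} to the random variables $Z_R := M(B_R)$, which are clearly nonnegative and nondecreasing in $R$. To invoke the lemma I need a polynomial moment bound of the form $\EX[M(B_R)^p] \lesssim R^m$ for some fixed exponent $p > 0$, and then choose $p$ and $m$ so that $m/p < 2 + \e$.

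\medskip

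\noindent\textbf{Step 1: Cover $B_R$ by unit balls and use subadditivity.} The ball $B_R$ can be covered by $O(R^2)$ balls of radius $1$ centered at lattice points (say at points of $\Z^2 \cap B_{R+1}$). Writing $B_R \subseteq \bigcup_{j=1}^{N_R} D_j$ with $N_R \lesssim R^2$ and each $D_j$ a translate of $B_1$, subadditivity of the measure gives $M(B_R) \le \sum_{j=1}^{N_R} M(D_j)$.

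\medskip

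\noindent\textbf{Step 2: Control the moments via the scaling/translation structure.} Fix $p \in (0, 1]$ (so that $p < 4/\r^2$ automatically, since $\r < 2$), and use the inequality $(\sum a_j)^p \le \sum a_j^p$ valid for $p \in (0,1]$ and $a_j \ge 0$. Then
$$\EX[M(B_R)^p] \le \sum_{j=1}^{N_R} \EX[M(D_j)^p].$$
Each $D_j$ is a unit ball, and by stationarity of the massive GFF (its covariance $G_m(x,y)$ depends only on $|x-y|$), the law of $M(D_j)$ does not depend on $j$: $\EX[M(D_j)^p] = \EX[M(B_1)^p] =: C_p' < \8$, which is finite by the moment bound recalled in \Cref{setup} (with $A = B_1$, using $p < 4/\r^2$). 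Hence
$$\EX[M(B_R)^p] \le N_R \cdot C_p' \lesssim_{\r} R^2.$$

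\medskip

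\noindent\textbf{Step 3: Apply \Cref{moment}.} We now have $\EX[M(B_R)^p] \le C R^2$ for all $R \ge 1$, with $m = 2$. Given $\e > 0$, choose $\0 = 2/p + \e' $ where we first pick $p$ small enough and $\e'$ so that $\0 < 2 + \e$ — actually more simply: take $p = 1$, so $m/p = 2 < 2 + \e$, and apply \Cref{moment} with $\0 = 2 + \e$. The lemma yields, $\P$-a.s., a random constant $C_\0 = C_{X,\r,\e}$ with $M(B_R) = Z_R \le C_\0 R^{2+\e}$ for all $R \ge 1$, which is exactly the claimed bound $M(B_R) \lesssim_{X,\r,\e} R^{2+\e}$.

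\medskip

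\noindent\textbf{Main obstacle.} The only genuine subtlety is making sure the constant $C_p'$ bounding $\EX[M(D_j)^p]$ is uniform over all the unit balls $D_j$; this is exactly where stationarity of the field is used, and one should note that while the $n$-regularized fields $\eta_n$ (hence $X_n$) are stationary by construction, one must pass this to the limit measure $M$ — but this is standard since $M$ is constructed as an a.s. vague limit and the law of $M$ restricted to a bounded set is determined by the limiting procedure, which commutes with translations. Everything else is a routine covering-plus-Borel–Cantelli argument already packaged into \Cref{moment}.
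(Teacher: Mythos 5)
Your proof is correct and follows essentially the same route as the paper: establish $\EX[M(B_R)^p]\lesssim R^2$ (with $p=1$ in the end) and feed it into \Cref{moment}. The paper obtains the first-moment bound more directly --- $\EX[M_n(A)]=|A|$ by the normalization of the regularized measure, and then Fatou's lemma gives $\EX[M(B_R)]\le\pi R^2$ --- so the covering-plus-stationarity argument of your Steps 1--2 is not needed, though it is harmless and equally valid.
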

\begin{proof}

Notice that for any $n$-regularized Liouville measure $M_n$ and bounded Borel set $A$ we have 
$$\EX [M_{n}(A)]=\int_A \EX\exp \left(\gamma X_{n}(z)-\frac{\gamma^{2}}{2} \EX\left[X_{n}(z)^{2}\right]\right) d z=\int_A 1 \,dz .$$
Hence letting $n\to\8$, by vague convergence (in fact we have $M_n(A)\to M(A)$) and Fatou's Lemma we have
  $$\EX [M(B_R)]\<\EX [\varliminf_{n\to\8} M_n(B_R)]\< \varliminf_{n\to\8}\EX [M_n(B_R)]=\pi R^2.$$ Then apply \Cref{moment} to get the bound.
\end{proof}

Next we give the growth rate of the coefficients of H\"older continuity of the Liouville measure, which will be also used in \Cref{DUE}.

\begin{prop}\label{liouvillemeasure}
	For any $\r\in(0,2)$ and $\2\in(0,\2_2)$, set $m_0(\r,\2)=\frac { \gamma ^ { 2 } } { 2 } + \frac { 4\2 \gamma ^ { 2 } } { ( \2_1 - \alpha ) ( \2_2 - \alpha ) }$. Then there exists a random constant $\bar C_R$ depending on $X,\r,\2,R$ such that $\P$-a.s. 
	$$\sup_{|x|\leq R}M(B_{x,r})\leq \bar C_Rr^\2\quad  \text{ for all } r\in(0,1]$$
	and for any $\e>0$ and any $R\geq1$ we have
		$$\bar C_R\lesssim_{X,\r,\2,\e} R^{m_0+\e}.$$ 
\end{prop}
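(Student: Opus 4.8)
The plan is to reduce the uniform-in-$x$ bound to a countable family of moment estimates for the Liouville mass of small balls, and then to feed the outcome into \Cref{moment}. Fix $\gamma\in(0,2)$ and $\alpha\in(0,\alpha_2)$, and recall $\xi_M(p)=-\frac{\gamma^2}{2}p^2+(2+\frac{\gamma^2}{2})p$. Because the massive GFF has a translation-invariant covariance, $M(B_{x,r})$ has the law of $M(rB_1)$ for every $x\in\C$; hence the recalled scaling estimate (with $A=B_1$) yields a constant $C_p$, independent of $x$, with $\EX[M(B_{x,r})^p]\le C_p\,r^{\xi_M(p)}$ for all $r\in(0,1]$ and all $p\in[0,4/\gamma^2)$.

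First I would set up a dyadic covering. For $k\ge0$ let $L_k=(2^{-k}\Z)^2\cap B_{R+1}$, a finite set of at most $CR^2 2^{2k}$ points when $R\ge1$, and put
$$Z_R:=\sup_{k\ge0}\;2^{k\alpha}\max_{x'\in L_k}M\big(B_{x',\,2^{1-k}}\big),$$
which is measurable and nondecreasing in $R$. Any ball $B_{x,r}$ with $|x|\le R$ and $r\in(0,1]$ lies in $B_{x',2^{1-k}}$ for $k=\lfloor\log_2(1/r)\rfloor$ (so that $2^{-k}\in[r,2r)$) and a suitable $x'\in L_k$, so $M(B_{x,r})\le 2^{\alpha}Z_R\,r^{\alpha}$; thus it suffices to take $\bar C_R:=2^{\alpha}Z_R$ and bound its moments. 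Using $(\max_i a_i)^p\le\sum_i a_i^p$, a union bound, and the moment estimate,
$$\EX[Z_R^p]\le\sum_{k\ge0}2^{kp\alpha}\sum_{x'\in L_k}\EX\big[M(B_{x',2^{1-k}})^p\big]\le C_p\,R^2\sum_{k\ge0}2^{-k(\xi_M(p)-p\alpha-2)},$$
which is finite once $\xi_M(p)>p\alpha+2$, i.e. once $\tfrac{\gamma^2}{2}p^2-(2+\tfrac{\gamma^2}{2}-\alpha)p+2<0$. (The $k=0$ term is dealt with separately using the scaling estimate with $A=B_2$.)

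Next I would analyse this quadratic in $p$. With $\beta:=2+\tfrac{\gamma^2}{2}-\alpha$ its discriminant is $\beta^2-4\gamma^2$, and the identity $\beta^2-4\gamma^2=(\alpha_1-\alpha)(\alpha_2-\alpha)$ — immediate from $\alpha_1+\alpha_2=4+\gamma^2$ and $\alpha_1-\alpha_2=4\gamma$ — shows its roots $p_\pm=\gamma^{-2}\big(\beta\pm\sqrt{(\alpha_1-\alpha)(\alpha_2-\alpha)}\big)$ are real and distinct precisely because $\alpha<\alpha_2$. Since the quadratic equals $2>0$ at $p=0$ and $4\alpha/\gamma^2>0$ at $p=4/\gamma^2$ and opens upward, the interval $(p_-,p_+)$ on which it is negative lies entirely inside $(0,4/\gamma^2)$; thus $\EX[Z_R^p]\le C_{p,\gamma}R^2$ for every $p\in(p_-,p_+)$. \Cref{moment} (with exponent $m=2$) then gives, $\P$-a.s., $Z_R\lesssim R^\theta$ for every $\theta>2/p$; pushing $p$ up to $p_+$ yields $\bar C_R\lesssim_{X,\gamma,\alpha,\e}R^{2/p_+ +\e}$ for every $\e>0$, and since $2/p_+=\tfrac12\big(\beta-\sqrt{(\alpha_1-\alpha)(\alpha_2-\alpha)}\big)\le m_0(\gamma,\alpha)$ (an elementary computation, with equality only at $\alpha=0$) this is the asserted bound.

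The step I expect to be the main obstacle is this last paragraph: identifying exactly which powers $p$ are admissible — a nonempty subinterval of $(0,4/\gamma^2)$, and it is here that $\alpha<\alpha_2$ enters, through the sign of the discriminant — and then sending $p$ to the endpoint $p_+$ while keeping the dyadic series summable, which is what fixes the exponent and, after the elementary comparison, $m_0$. Everything else (translation invariance, the covering, measurability and monotonicity of $Z_R$, the geometric series, and the $r\approx1$ boundary case) is routine. As an alternative one could instead quote the local Hölder regularity of GMC on the unit square together with the order of its finite moments, tile $B_R$ by $\asymp R^2$ translated copies, bound the moments of the resulting maximum, and again invoke \Cref{moment}; that route delivers the constant $m_0$ more directly.
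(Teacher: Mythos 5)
Your proposal is correct, and it reaches the stated bound by a genuinely different route from the paper. The paper follows the Garban--Rhodes--Vargas H\"older-continuity argument: it fixes a unit-scale block, defines the good events $A_n$ (all generation-$n$ dyadic squares have mass at most $2^{-\alpha n}$), refines Borel--Cantelli via the disjoint decomposition $\~E_n=E_n\setminus E_{n-1}$, defines the random H\"older constant $\bar C_0$ piecewise on the $\~E_n$, and controls $\EX\bar C_0^\0$ by H\"older's inequality with exponents $q,q'$ before tiling $B_{R+1}$ by $\asymp R^2$ translates and invoking \Cref{moment}. You instead bypass the event decomposition entirely: you build one multiscale supremum $Z_R$ over lattices at every dyadic scale, bound $\EX[Z_R^p]$ by a direct union bound plus the scaling estimate, and observe that the resulting geometric series converges exactly when $K(\2,p)=\xi_M(p)-\2p-2>0$, i.e.\ for every $p\in(p_-,p_+)\subset(0,4/\r^2)$. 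This is shorter and, because it avoids the H\"older step (which in the paper forces $\0<K(\2,p)/(q\2)\wedge 4/(\r^2q')$ and, after optimization, $\0<4/(\r^2(1+8\2/D))$ with $D=(\2_1-\2)(\2_2-\2)$), it yields the strictly better growth exponent $2/p_+=\tfrac12(\3-\sqrt D)$ with $\3=2+\tfrac{\r^2}{2}-\2$. Your final comparison $2/p_+\le m_0$ is indeed correct: writing $2/p_+=\tfrac{\r^2}{2}p_-$, the mean value theorem applied to $Q(p)=\tfrac{\r^2}{2}p^2-\3p+2$ between $p=1$ (where $Q=\2$) and $p=p_-$ gives $p_--1=\2/(\3-\r^2\xi)<\2/\sqrt D\le 8\2/D$ since $D\le\2_1\2_2\le 4$, whence $2/p_+<\tfrac{\r^2}{2}+\tfrac{4\2\r^2}{D}=m_0$ for $\2>0$. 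So your argument proves a slightly stronger statement than the proposition; the only price is that the improvement disappears back into the $R^{m_0+\e}$ form when you quote the result as stated.
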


\begin{proof}
We prove it in a similar manner as in \cite[Theorem 2.2]{garban2016liouville}, but give the coefficient estimates depending on $R$. The main idea is to improve Borel-Cantelli's lemma and use stationarity of the Liouville measure.

	For $n\in\N$ we partition $[-8,8]^2$ into $2^{2n}$ dyadic squares $\{I_n^j:j=1,2,...,2^{2n}\}$ of equal size. Fix $\2>0$, let $A_n$ be the event that $M(I_n^j)\leq2^{-\2n}$ for all $1\leq j\leq 2^{2n}$. Then for $p\in(0,4/\r^2)$ we have using the stationarity of GFF and the power law of the Liouville measure 
\begin{align*} \mathbb{P}[A_n^c] & \leq 2^{p\alpha n} \mathbb{E}\left[\sum_{1 \leq j \leq 2^{2 n}} M\left(I_{n}^{j}\right)^{p}\right] \\ & \leq  C_p2^{-nK(\2,p)}
\end{align*}
	where $K(\2,p):=\xi_M(p)-\2p-2$. Set $E_n=\cap_{k=n}^\8A_k$ and $\~ E_0=E_0$, $\~E_n=E_n\-E_{n-1}$ for $n\in\N^*$, then $\P[\~E_n]\leq\P[A_{n-1}^c]$ for $n\in\N^*$,  and $\~E_n$ are disjoint and $\P[\cup_{n=0}^\8\~E_n]=\P[\cup_{n=0}^\8E_n]=1-\P[A^c_n \text{ i.o.}]=1$ by Borel-Cantelli's lemma. 
	
	Define
	$$\bar C_0:=\begin{cases}
		4 					& \text{on $\~E_0$}\\
		4\vee\sup_{|x|\leq1,r\in(2^{-n},2)}\frac{M(B_{x,r})}{r^\2} & \text{on $\~E_n$ for $n\in\N^*$.}\\
	\end{cases}$$
Note that $\bar C_0$ is almost surely well-defined because $\~E_n$ are disjoint and $\P[\cup_{n=0}^\8\~E_n]=1$. Also $\bar C_0$ is $\mathcal A$-measurable as $\sup_{|x|\leq1,r\in(2^{-n},2)}\frac{M(B_{x,r})}{r^\2}=\sup_{x\in\Q^2, |x|\leq1,r\in(2^{-n},2)}\frac{M(B_{x,r})}{r^\2}$. This is because for $x\notin \Q^2$ with $|x|\<1$ and $r\in(2^{-n},2)$ we can find $x_i\in \Q^2$ with $|x_i|\<1$ and $r_i \in (2^{-n},2)$ such that $x_i\to x$, $r_i\downarrow r$, $B_{x,r}\inn B_{x_i,r_i}$ and hence $\frac{M(B_{x,r})}{r^\2}\<\limsup_{i\to\8}\frac{M(B_{x_i,r_i})}{r_i^\2}$.

We claim $\P$-a.s. $M(B_{x,r})\leq \bar C_0r^\2$ for any $|x|\leq1$ and $r\in(0,1] $. Indeed, on $\~E_n$, when $r\in(2^{-n},1]$ by the definition of $\bar C_0$ we have $M(B_{x,r})\leq\bar C_0r^\2$; when $r\in(2^{-k-1},2^{-k}]$ for $k\geq n$,
	any ball $B_{x,r}$ is contained in at most 4 dyadic squares $I_{k+1}^j$ and each square $I_{k+1}^j$ (of size $2^{3-k}$) has Liouville measure no greater than $2^{-(k+1)\2}$, hence $M(B_{x,r})\leq 4\cdot2^{-\2(k+1)}\leq4r^\2$.
	
	Moreover, for $\0>0$ by H\"older inequality for $q^{-1}+q'^{-1}=1$
\begin{align*}
\EX \bar C_0^\0&\leq 4^\0+\sum_{n=1}^\8 2^{n\2\0}\EX[M(B_3)^\0;\~E_n]\\
&\leq 4^\0+\sum_{n=1}^\8  2^{n\2\0}\EX[M(B_3)^{\0q'}]^{1/q'}\P[\~E_n]^{1/q} \\
&\leq 4^\0+\sum_{n=1}^\8  2^{n\2\0}\EX[M(B_3)^{\0q'}]^{1/q'}(C_p2^{-(n-1)K(\2,p)})^{1/q}\\
&= 4^\0+C_p^{1/q}\EX[M(B_3)^{\0q'}]^{1/q'}\sum_{n=1}^\8 2^{-n(K(\2,p)/q-\2\0)+K(\2,p)/q}.
\end{align*}
The above is finite if $\0q'<4/\r^2$ and $K(\2,p)/q-\2\0>0$. So
$\0<\frac{K(\2,p)}{q\2}\wedge\frac{4}{\r^2q'}$. Take $p=\frac{2+\r^2/2-\2}{\r^2}$ $(<\frac{4}{\r^2})$ (whence $K(\2,p)=\frac{({\2_1}-\2)({\2_2}-\2)}{2\r^2}$) and $q=\frac{({\2_1}-\2)({\2_2}-\2)}{8\2}+1$ to maximize the right hand side to get $\EX \bar C_0^\0<\8$ whenever $\0<\frac{({\2_1}-\2)({\2_2}-\2)}{2\r^2(({\2_1}-\2)({\2_2}-\2)/8+\2)}$. 

Now do the same partition and reasoning for each region $z_k+[-8,8]^2$ where $z_k\in\Z^2$ and we get a sequence of $\bar C_{z_k}$ (defined similar to $\bar C_0$) with the same distribution as $\bar C_0$. Set $\bar C_R=\max_{z_k\in\Z^2\cap B_{R+1}}\bar C_{z_k}$. Since any ball $B_{x,r}$ with $|x|\leq R$ and $r\in(0,1]$ is contained in one of the regions $\{z_k+[-8,8]^2\}_{z_k\in\Z^2\cap B_{R+1}}$ (one can find $z_k\in \Z^2$ with $|x-z_k|\<1$ for each $x\in \C$ with $|x|\<R$), thus $\sup_{|x|\leq R}M(B_{x,r})\leq \bar C_Rr^\2$ for any $r\in(0,1]$ . Moreover, using union bound and the stationarity of GFF, we have for some absolute constant $C>0$ that
$$ \EX\bar C_R^\0\< \sum_{z_k\in \Z^2 \cap B_{R+1}} \EX \bar C_{z_k}^\0\<CR^2\EX \bar C_0^\0.$$
By \Cref{moment}, we can show that $\bar C_R\lesssim_{X,\r,\2,m_1} R^{m_1}$ for $R\geq1$ when $m_1>2/\0$. Combining with the bound for $\0$, we get $m_1>m_0(\r,\2):=\frac { \gamma ^ { 2 } } { 2 } + \frac { 4\2 \gamma ^ { 2 } } { ( \2_1- \alpha ) ( \2_2- \alpha ) }$. 
\end{proof}

~\\

It is natural to ask whether we can get similar estimates for the lower bound coefficients. Here we give the estimates but with some cost on the range of lower H\"older exponent $\2$. We won't use the following proposition in the rest of this paper.

\begin{prop}\label{LMLB}
	For any $\r\in(0,2)$ and $\2>\r^2/2+2\sqrt 2\r+2~(>\2_1)$, there is $m_{00}(\r,\2)>0$ such that, there exists a random constant $\bar{c}_R$ depending on $X,\r,\2,R$ such that $\P$-a.s. 
		$$\inf_{|x|\leq R}M(B_{x,r})\geq \bar{c}_Rr^\2\quad  \text{ for all } r\in(0,1]$$
	and for any $\e>0$ and any $R\geq1$ we have
		$$\bar{c}_R\gtrsim_{X,\r,\2,\e} R^{-m_{00}-\e}.$$

\end{prop}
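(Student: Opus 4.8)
The plan is to transplant the proof of \Cref{liouvillemeasure}, interchanging the two inequalities and replacing every moment of $M$ by a negative one. For $n\in\N$ partition $[-8,8]^2$ into the $2^{2n}$ dyadic squares $I_n^j$ of side $2^{4-n}$ and let $A_n=\{M(I_n^j)\ge 2^{-\2 n}\ \text{for all }j\}$. All negative moments of $M$ are finite, so Markov applied to $M(I_n^j)^{-q}$, the stationarity of the MGFF, and the scaling bound $\EX[M(rA)^{-q}]\le C_q r^{-\~\xi(q)}$ for $r\in(0,1]$ recorded in \Cref{setup} give, for every $q>0$,
$$\P[A_n^c]\le\sum_j 2^{-q\2 n}\,\EX[M(I_n^j)^{-q}]\lesssim_q 2^{\,n(2-q\2+\~\xi(q))}.$$
The map $q\mapsto(2+\~\xi(q))/q$ attains its minimum $\2_1$ at $q=2/\r$, so for $\2>\2_1$ the exponent above is negative on a nonempty range of $q$ and $\sum_n\P[A_n^c]<\8$; the interval $\2>\r^2/2+2\sqrt2\,\r+2$ of the statement lies inside $(\2_1,\8)$.

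Following \Cref{liouvillemeasure} I would then sharpen Borel--Cantelli: with $E_n=\cap_{m\ge n}A_m$, $\~E_0=E_0$, $\~E_n=E_n\-E_{n-1}$, the $\~E_n$ partition a full-measure set and $\P[\~E_n]\le\P[A_{n-1}^c]$. Replacing the ``a ball meets $\le4$ squares'' step there by its lower-bound analogue, the point is that every $B_{x,r}$ with $|x|\le1$, $r\in(0,1]$ \emph{contains} a full dyadic square of some level $m\le\log_2(1/r)+6$; on $\~E_n$ every dyadic square of level $\ge n$ has mass at least $2^{-\2\,(\mathrm{level})}$, and a short case split (small $r$, where the contained square already has level $\ge n$, versus larger $r$, where one first shrinks $B_{x,r}$ to a radius-$\asymp 2^{-n}$ ball) yields $M(B_{x,r})\gtrsim 2^{-\2 n}r^\2$ on $\~E_n$, uniformly in such $x,r$. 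Hence $\bar c_0:=\inf\{M(B_{x,r})/r^\2:|x|\le1,\ r\in(0,1]\}$ is a.s.\ positive, is $\mathcal A$-measurable once the infimum is taken over rational $x,r$ (as in \Cref{liouvillemeasure}), and $\bar c_0\gtrsim 2^{-\2 n}$ on $\~E_n$; consequently
$$\EX[\bar c_0^{-\0}]\lesssim_q\sum_n 2^{\2\0 n}\,\P[\~E_n]\lesssim_q\sum_n 2^{\,n(\2\0+2-q\2+\~\xi(q))}<\8\quad\text{whenever}\quad \0<q-\frac{2+\~\xi(q)}{\2},$$
and the supremum of the right side over $q$, attained at $q^\ast=(\2-2-\r^2/2)/\r^2$, equals $\0^\ast:=\dfrac{(\2-\2_1)(\2-\2_2)}{2\r^2\2}$, which is positive for $\2>\2_1$.

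To globalize I would rerun this on every translate $z_k+[-8,8]^2$, $z_k\in\Z^2$, obtaining $\bar c_{z_k}\overset{d}{=}\bar c_0$, and set $\bar c_R:=\min_{z_k\in\Z^2\cap B_{R+1}}\bar c_{z_k}$. Since every $B_{x,r}$ with $|x|\le R$, $r\in(0,1]$ lies inside one such translate, $\inf_{|x|\le R}M(B_{x,r})\ge\bar c_R r^\2$, and a union bound with stationarity gives $\EX[\bar c_R^{-\0}]\le CR^2\EX[\bar c_0^{-\0}]$. Now $Z_R:=1/\bar c_R$ is a.s.\ nondecreasing in $R$ with $\EX[Z_R^{\0}]\lesssim R^2$, so \Cref{moment} yields $Z_R\lesssim R^{m_1}$ for every $m_1>2/\0$; letting $\0\uparrow\0^\ast$ and $m_1\downarrow 2/\0^\ast=:m_{00}(\r,\2)=\dfrac{4\r^2\2}{(\2-\2_1)(\2-\2_2)}$ gives $\bar c_R\gtrsim_{X,\r,\2,\e}R^{-m_{00}-\e}$ for every $\e>0$, with $m_{00}>0$ on the stated range of $\2$.

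The one genuinely new difficulty relative to \Cref{liouvillemeasure} is that $M(B_{x,r})$ has no fixed-set lower bound playing the role of $M(B_{x,r})\le M(B_3)$ there, so the moment estimate must absorb the negative moments $\EX[M(I_m^j)^{-q}]\lesssim 2^{m\~\xi(q)}$ of arbitrarily small cells; since $\~\xi(q)=(2+\tfrac{\r^2}2)q+\tfrac{\r^2}2q^2$ eventually dominates the volume exponent $2$, it is this mismatch that forces $\2$ to be large and inflates $m_{00}$, and it is only if one does not push the geometric step and the $(q,\0)$-optimization that the lower H\"older exponent ends up in the (weaker) range $\2>\r^2/2+2\sqrt2\,\r+2$ asserted in the proposition rather than down to $\2_1$. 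A recurring but routine point, handled exactly as in \Cref{liouvillemeasure}, is the measurability and a.s.\ finiteness of the various suprema and infima over uncountable families of Euclidean balls.
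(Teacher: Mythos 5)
Your proposal is correct and follows the paper's skeleton exactly (dyadic squares, the events $A_n$, the refined Borel--Cantelli decomposition $\~E_n=E_n\-E_{n-1}$, the geometric fact that a ball contains a dyadic square of comparable level, the translation to $z_k+[-8,8]^2$ with a union bound, and \Cref{moment} at the end), but it handles the one substantive step --- the moment bound for $\bar c_0^{-\0}$ --- by a genuinely different and sharper route. The paper estimates $\EX[\bar c^{-\0};\~E_n]$ by H\"older's inequality, decoupling $\sup_j M(I_{n+2}^j)^{-\0}$ from $\mathds 1_{\~E_n}$ and then invoking the negative moments $\EX[M(I_{n+2}^1)^{-\0q'}]\lesssim 2^{n\~\xi(\0q')}$ of small cells; the growth of this factor is what forces the condition $2-K(\2,p)/q<0$ and hence the threshold $\2>2+\r^2/2+2\sqrt2\r$. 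You instead observe that the event $\~E_n\inn E_n\inn A_k$ for all $k\ge n$ already forces the \emph{deterministic} bound $\bar c_0\ge c\,2^{-\2 n}$ on $\~E_n$ (small $r$: the contained square has level $\ge n$; large $r$: shrink to radius $\asymp2^{-n}$ first), so that
$$\EX[\bar c_0^{-\0}]\lesssim\sum_n 2^{\2\0 n}\,\P[\~E_n]\lesssim\sum_n 2^{n(\2\0+2-q\2+\~\xi(q))},$$
with no H\"older loss and no further negative-moment input. Your optimization over $q$ is correct ($q^\ast=(\2-2-\r^2/2)/\r^2$, $\0^\ast=(\2-\2_1)(\2-\2_2)/(2\r^2\2)$), and what this buys is a strict improvement: the conclusion holds for the full range $\2>\2_1$ with the explicit exponent $m_{00}=4\r^2\2/((\2-\2_1)(\2-\2_2))$, whereas the paper only reaches $\2>2+\r^2/2+2\sqrt2\r$. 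Since the stated range is contained in $(\2_1,\8)$, your argument proves the proposition (and more). Your closing diagnosis of where the paper's restriction comes from is also accurate. The only points to tidy in a write-up are bookkeeping: the exact level of the dyadic square contained in $B_{x,r}$ under your side-$2^{4-n}$ normalization (a square of side $s$ fits in a ball of radius $r$ only if $s\sqrt2\le r$, so the level is $\lfloor\log_2(1/r)\rfloor+6$ rather than $+3$, changing only the harmless constant $c$), and the measurability of the infimum, which you correctly delegate to the rational-parameter argument of \Cref{liouvillemeasure}.
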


\begin{proof}
	
	For each $n\in \N$ we partition $[-1,1]^2$ into $2^{2n}$ dyadic squares $\{I_n^j:j=1,2,...,2^{2n}\}$ of equal size and define good events 
	$$A_n=\{\inf_{1\<j\<2^{2n}} M(I_n^j)\>2^{-\2n}\}$$
	and set $E_n=\cap_{k=n}^\8A_k$ and $\~E_0=E_0$, $\~E_n=E_n\-E_{n-1}$ for $n\in\N^*$. Note that $\~E_n\inn A_{n-1}^c$.
	 Then for $p<0$ by using Markov inequality, the stationarity of GFF and the power law of the Liouville measure we have
\begin{align*} \P[\~E_{n+1}]\<\mathbb{P}[A_n^c] & \leq 2^{p\alpha n} \mathbb{E}\left[\sum_{1 \leq j \leq 2^{2 n}} M\left(I_{n}^{j}\right)^{p}\right] \\ & \leq  C_p2^{-nK(\2,p)}
\end{align*}
	where $K(\2,p):=\xi_M(p)-\2p-2$. When $K(\2,p)>0$ by Borel-Cantelli's lemma we have  
 $$\P[\cup_{n=0}^\8 \~E_n]=\P[\cup_{n=0}^\8E_n]=1-\P[A_n^c \text{ i.o.}]=1.$$
	Define
	$$\bar{c}:=\begin{cases}
		8^{-\2} 					& \text{on $\~E_0$}\\
		8^{-\2}\wedge\inf_{|x|\leq1,r\in(2^{-n},1]}\frac{M(B_{x,r})}{r^\2} & \text{on $\~E_n$ for $n\in\N^*$.}\\
	\end{cases}$$
Note that $\bar{c}$ is almost surely well-defined because $\~E_n$ are disjoint and $\P[\cup_{n=0}^\8 \~E_n]=1$. The $\mathcal A$-measurability of $\bar c$ can be shown in a similar way to the proof of the $\mathcal A$-measurability of $\bar C_0$ in \Cref{liouvillemeasure}.
	We claim $\P$-a.s. $M(B_{x,r})\geq \bar{c}r^\2$ for any $|x|\leq1$ and $r\in(0,1] $. Indeed, on $\~E_n$, when $r\in(2^{-n},1]$ by the definition of $\bar{c}$ we have $M(B_{x,r})\geq\bar{c}r^\2$; when $r\in(2^{-k-1},2^{-k}]$ for $k\geq n$,
	any ball $B_{x,r}$ contains at least 1 dyadic square $I_{k+3}^j$ and each square $I_{k+3}^j$ (of size $2^{-2-k}$) has Liouville measure no less than $2^{-(k+3)\2}$, hence $M(B_{x,r})\> 2^{-\2(k+3)}\>8^{-\2} r^\2$.
	
	Moreover, for $\0>0$ by H\"older inequality for $q^{-1}+q'^{-1}=1$
\begin{align*}
\EX (\bar{c})^{-\0} 
&\leq 8^{\2\0}+\sum_{n=1}^\8 \EX[\sup_{|x|\leq1,r\in(2^{-n},1]}r^{\2\0}M(B_{x,r})^{-\0};\~E_n]\\
&\leq 8^{\2\0}+\sum_{n=1}^\8 \EX[\sup_{1\<j\<2^{2n+4}}M(I_{n+2}^{j})^{-\0};\~E_n]\\
&\leq 8^{\2\0}+\sum_{n=1}^\8  2^{2n+4}\EX[M(I_{n+2}^{1})^{-\0q'}]^{1/q'}\P[\~E_n]^{1/q} \\
&\leq 8^{\2\0}+C_{\r,p,q,\0}\sum_{n=1}^\82^{2n}\cdot2^{-n\xi(-\0q')/q'}\cdot2^{-(n-1)K(\2,p)/q}
\end{align*}
The above is finite if $$2-K(\2,p)/q-\xi(-\0 q')/q'<0.$$
Solving the above inequality, we have 
$$\0<-\frac{\xi^{-1}(q'(2-K(\2,p)/q))}{q'}$$
where $\xi^{-1}(x)=\frac{1}{2}+\frac{2}{\r^2}-\frac{1}{\r^2}\sqrt{(2+\r^2/2)^2-2\r^2x}$ by the quadratic formula.
Since $\0>0$ we need $2-K(\2,p)/q<0$. Set $p=p(\2)=\frac{2+\r^2/2-\2}{\r^2}(<0)$, and noting that $q$ can be chosen arbitrarily close to 1, we have $\2>2+\frac{\r^2}{2}+2\sqrt{2}\r$.

Now do the same partition and reasoning for each region $z_k+[-1,1]^2$ where $z_k\in\Z^2$ and we get a sequence of $\bar{c}_{z_k}$ (defined similar to $\bar{c}$) with the same distribution as $\bar{c}$. Set $\bar{c}_R=\min_{z_k\in\Z^2\cap B_{R+1}}\bar c_{z_k}$. Since for any ball $B_{x,r}$ with $|x|\leq R$ and $r\in(0,1]$, one can find $z_k\in \Z^2\cap B_{R+1}$ with $|x-z_k|\<1$, we have $\inf_{|x|\leq R}M(B_{x,r})\geq \bar{c}_R r^\2$ for any $r\in(0,1]$. Moreover, using union bound and the stationarity of GFF, we have for some absolute constant $C>0$ that
\begin{align*}
 \EX\bar{c}_R^{-\0}
 &\<\sum_{z_k\in\Z^2\cap B_{R+1}}\EX\bar{c}_{z_k}^{-\0}\<CR^2\EX \bar{c}^{-\0}.
\end{align*}
By \Cref{moment}, we can show that $\bar{c}_R\gtrsim_{X,\r,\2,m} R^{-m}$ for $R\geq1$ when $m>2/\0$. Combining with the bound for $\0$ we get $$m>2/\0>2q'\r^2/\left(\sqrt{(2+\r^2/2)^2-4q'\r^2+\frac{q'}{q}(\2_2-\2)(\2_1-\2)}-\frac{\r^2}{2}-2\right)=:m_{00}(\r,\2)$$
where we have chosen some $q=q(\2)$ such that $2-K(\2,p(\2))/q<0$.
\end{proof}

\subsection{Exit time estimates}

\begin{lem} \label{exitmoment}
	For any $\r\in(0,2),~q>0,~p>1,p':=p/(p-1),~\kappa>p(2+\~\xi(q))$, and any $\e>0$, there exists a random constant $\hat C_R$ depending on $X,\r,q,\k,R$ such that $\P$-a.s. 
	$$\sup_{|x|\leq R}E_x[\tau_{x,r}^{-q}]\leq \hat C_R r^{-\kappa}\quad  \text{ for all } r\in(0,1],$$
	and for any $R\geq1$,
	$$\hat C_R\lesssim_{X,\r,q,p,\kappa,\e}R^{2p'+\e}.$$ 
\end{lem}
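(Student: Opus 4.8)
The plan is to use the time-change definition of the Liouville Brownian motion to turn the negative moment of $\t_{x,r}$ into a small-value estimate for the Liouville clock $F$ run along Brownian paths, and then to separate the Brownian randomness from the Gaussian multiplicative chaos randomness by H\"older's inequality. First I would record that, writing $\sigma_{x,r}:=\inf\{t\ge 0:W_t\notin B_{x,r}\}$ for the exit time of the underlying planar Brownian motion, the identity $Y_t=W_{\bar F(t)}$ together with the fact that $\bar F=F^{-1}$ is a continuous increasing bijection of $[0,\8)$ (recalled in \Cref{setup}) gives $\t_{x,r}=F(\sigma_{x,r})$ when $W$ starts at $x$. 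Hence $E_x[\t_{x,r}^{-q}]=q\int_0^\8 t^{-q-1}P_x\big(F(\sigma_{x,r})\le t\big)\,dt$, so everything reduces to controlling the lower tail $P_x(F(\sigma_{x,r})\le t)$ for small $t$.

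Next, for an auxiliary time $t'$ to be optimized, I would use monotonicity of $F$ to split $P_x(F(\sigma_{x,r})\le t)\le P_x(\sigma_{x,r}\le t')+P_x\big(F(t')\le t,\ W_s\in B_{x,r}\ \forall s\le t'\big)$. The first term is a Gaussian exit probability, at most $C\exp(-cr^2/t')$. The second term is the real work: one needs a quantitative lower bound on the fixed-Euclidean-time clock $F(t')$ of a Brownian path confined to $B_{x,r}$, in terms of the Liouville mass of small Euclidean balls centred near $x$. Since $F$ and the $F^n$ are the PCAFs of $W$ with Revuz measures $M$ and $M_{\r,n}$, I would work with the approximation $F^n\to F$ from \Cref{setup} together with the Revuz identity $E_z[\int_0^{\sigma_D}f\,dF]=\int_D G_D(z,w)f(w)\,M(dw)$ for a ball $D$ (with $G_D$ its Brownian Green function, which is bounded below on a concentric half-ball), so as to dominate the second term by $t^{q}$ times a negative moment of the clock, which after taking $t'\asymp r^2$ is controlled by a bounded power of $M(B_{x,\cdot})^{-q}$. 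Feeding this back into the $t$-integral and optimizing in $t'$ should give, for each fixed realisation of the field, $E_x[\t_{x,r}^{-q}]\lesssim_{q,p}r^{-\kappa}\,\Xi_x(r)$, with $\Xi_x(r)$ a negative power of Liouville masses of balls around $x$; the exponent comes out as $\kappa=p\,(2+\~\xi(q))$, where the summand $2$ is the Brownian exit scale ($t'\asymp r^2$), $\~\xi(q)=-\xi_M(-q)$ is the order of the negative moment of $M(B_{x,r})$, and the overall factor $p>1$ is exactly what is needed for the $t$-integral produced in the first step to converge near $0$ once one spares a H\"older exponent $1/p$ (paired with $1/p'$ on the Gaussian factor).

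For the passage to the global, $R$-dependent statement I would fix $r$, cover $\overline{B_R}$ by $\asymp (R/r)^2$ balls of radius $r$, and use $\t_{x,r}\le\t_{x',2r}$ whenever $|x-x'|\le r$ to replace $\sup_{|x|\le R}E_x[\t_{x,r}^{-q}]$ by a finite maximum over centres $x'$. Then $\hat C_R:=\sup_{|x|\le R}\sup_{r\in(0,1]}r^{\kappa}E_x[\t_{x,r}^{-q}]$ is, after restricting to dyadic $r$, dominated by a sum over scales of maxima of the quantities from the previous paragraph; raising to the power $1/p'\le 1$, using subadditivity, the stationarity (translation invariance) of the massive GFF, and the power-law bound $\EX[M(rA)^{-q}]\lesssim r^{\xi_M(-q)}=r^{-\~\xi(q)}$ recalled in \Cref{setup}, one should obtain $\EX[\hat C_R^{1/p'}]\lesssim R^2$ — the geometric series over scales converging precisely because $\kappa>p(2+\~\xi(q))$, and the factor $R^2$ being the number of unit cells in $B_R$. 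Since $\hat C_R$ is a.s.\ nondecreasing in $R$, \Cref{moment} applied with $m=2$ and exponent $1/p'$ then yields $\hat C_R\lesssim_{X,\r,q,p,\kappa,\e}R^{2p'+\e}$, which is the claim.

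The main obstacle is the second paragraph: one cannot simply write $F(t')=\int_0^{t'} e^{\r X(W_s)}\,ds$ and bound it pathwise from below, since $X$ is only a Schwartz distribution, planar Brownian motion has no occupation density, and $M$ has no density either. Extracting a usable lower bound for the confined clock $F(t')$ in terms of Liouville masses of balls — uniformly in $x$ and, crucially, all the way down to $r\to 0$ — requires careful work with the regularised PCAFs $F^n$, the Revuz formula, and the limit, and it is the sharpness of this comparison, together with the book-keeping of the two H\"older exponents, that pins down the precise admissible range of $\kappa$.
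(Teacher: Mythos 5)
Your reduction $\t_{x,r}=F(\sigma_{x,r})$ and your endgame (cover $B_{R}$ by unit cells, use stationarity of the field to get $\EX[\hat C_R^{1/p'}]\lesssim R^2$, then invoke \Cref{moment} with $m=2$ and exponent $1/p'$ to obtain $R^{2p'+\e}$) match the structure of the actual proof. But there is a genuine gap exactly where you locate "the main obstacle": the entire quenched/annealed core of the lemma rests on the single-scale annealed bound $\EX\, E_{\mu_{x,r}}[\t_{x,2r}^{-q}]\le C_{\r,q}r^{-\tilde\xi(q)}$, which the paper does \emph{not} reprove --- it is imported wholesale from \cite[Proposition 3.2]{andres2016continuity}. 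Your proposed route to it (split off the Brownian exit probability, then lower-bound the confined clock $F(t')$ via the Revuz identity) cannot work as sketched: the Revuz/Green-function formula is a \emph{first-moment} identity, and a first moment gives no lower bound on $F$ holding with the very high probability needed to control a negative moment of \emph{arbitrary} order $q$. Producing $\EX E[\t^{-q}]<\8$ for all $q>0$ requires a multi-scale iteration (the path must cross $\asymp\log(1/r)$ disjoint annuli, each contributing a near-independent amount of Liouville time, combined with a Paley--Zygmund-type second-moment estimate at each scale and the negative moments of the chaos). This is precisely the content of the cited proposition and is not recoverable from the one-shot Green-function argument you describe. A secondary issue is that you start the exit time at the deterministic point $x$; the known annealed bound is for harmonic-measure starting distributions, and the paper reduces point starts to these via the strong Markov property over a lattice $\Xi_{n+1}$ --- a step your covering argument would also need.

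The exponent bookkeeping is also not coherent as written. You attribute the factor $p$ in $\kappa>p(2+\tilde\xi(q))$ to Hölder applied inside the $t$-integral of the tail formula, but no computation along those lines produces that threshold. In the paper the condition arises from a layered Borel--Cantelli decomposition $\~E_n=E_n\setminus E_{n-1}$: one pays $r_n^{-2}$ for the union bound over the lattice, bounds $\P[\~E_n]\lesssim r_n^{\kappa-\tilde\xi(q)-2}$ by Chebyshev, and then applies Hölder in the chaos expectation as $\EX[Z^{1/p'};\~E_n]\le(\EX Z)^{1/p'}\P[\~E_n]^{1/p}$; the geometric series converges iff $\frac{1}{p}(\kappa-2)-\frac{2}{p'}-\tilde\xi(q)>0$, i.e.\ $\kappa>p(2+\tilde\xi(q))$. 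Your "naive" alternative (bound the sup by a sum over dyadic scales and use subadditivity of $z\mapsto z^{1/p'}$ plus Jensen) is a legitimate different mechanism, but it yields the condition $\kappa>2p'+\tilde\xi(q)$, not the one in the statement; you would need to either adopt the paper's layered decomposition or rework the admissible range of $\kappa$ (and its downstream use in \Cref{mainthm}) consistently. As it stands, the asserted threshold is not derived by your argument.
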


\begin{proof}
We follow the proof in \cite[Proposition 3.2]{andres2016continuity}, but give the coefficient estimates depending on $R$. The main idea is the same as \Cref{liouvillemeasure}, i.e., to improve Borel-Cantelli's lemma and use the stationarity of the Liouville measure.

Let $\mu_{y,r}^z$ be the harmonic measure of the circle $\6 B_{y,r}$ viewed at $z\in\C$. In particular when $z=y$, $\mu_{y,r}^z$ is the uniform distribution on $\6 B_{y,r}$ and we set $\mu_{y,r}=\mu_{y,r}^y$. When $|z-y|\leq r/2$ we have $\mu_{y,r}^z\leq C\mu_{y,r}$ for some absolute constant $C>0$.
For $n\in\N$, set $r_n:=2^{-n}$ and $\Xi_n:=\{(i2^{-n},j2^{-n}):i,j\in [-2^{n},2^{n}]\cap\Z\}$. 
In the proof of \cite[Proposition 3.2]{andres2016continuity}, they obtained that
	$$\mathbb{E} E_{\mu_{x,r_n}}\left[\tau_{x,2r_n}^{-q}\right] \leq C_{\r,q} r^{-\tilde{\xi}(q)}.$$ 
	Define the event
	$$A_n:=\{\max_{x\in\Xi_{n+1}} E_{\mu_{x,r_n}}[\tau_{x,2r_n}^{-q}]\leq r_n^{-\kappa}\},\quad E_n:=\cap_{k=n}^\8A_k$$ and $\~E_0:=E_0$, $\~E_n:=E_n\-E_{n-1}$ for $n\in\N^*$. For $n\in\N$ we have using the stationarity of GFF and the power law of the Liouville measure that  
	\begin{align*}
\P[\~E_{n+1}]\leq \P[A_n^c]
&\leq r_n^{\kappa}\sum_{x\in\Xi_{n+1}}\EX E_{\mu_{x,r_n}}[\tau_{x,2r_n}^{-q}]\\
&\leq r_n^{\kappa}\cdot (2^{n+1}+1)^2C_{\r,q}r_n^{-\~\xi(q)} \\
&\<9C_{\r,q}r_n^{\kappa-\~\xi(q)-2}. 
\end{align*}
By Borel-Cantelli's lemma $\P[\cup_{n=0}^\8 \~E_n]=\P[\cup_{n=0}^\8E_n]=1-\P[A_n^c \text{ i.o.}]=1$.

Now define 
\[\hat C_0:=\begin{cases}
			C8^{\kappa} &\text{on $\~E_0$},\\
			C\left( 8^{\kappa}\vee\max_{x\in \Xi_{n+3}}E_{\mu_{x,r_{n+2}}}[\tau_{x,r_{n+1}}^{-q}] \right) &\text{on $\~E_n$ for $n\in\N^*$}.\\
			\end{cases}
\]
Note that $\hat C_0$ is well-defined because $\~E_n$ are disjoint and $\P[\cup_{n=0}^\8 \~E_n]=1$.
We claim 
$$E_x[\tau_{x,r}^{-q}]\leq\hat C_0r^{-\kappa}$$ 
for all $x\in B_1$ and $r\in(0,1]$. Indeed, fix $n_0\in\N$. When $r\in(2^{-n+2}, 1]$ for $n\geq n_0+3$, we have for any $x\in B_1$, there is some $x_i\in\Xi_{n+1}$ such that $|x-x_i|\leq r_{n+1}$. By the strong Markov property
$$
 E_x\tau_{x, r}^{-q}
\leq E_{\mu^x_{x_i, r_n}}[\tau_{x,r}^{-q}]
\leq E_{\mu^x_{x_i, r_n}}[\tau_{x_i,2r_n}^{-q}]
\leq CE_{\mu_{x_i, r_n}}[\tau_{x_i,2r_n}^{-q}],
$$
and this is at most $\hat C_0\< \hat C_0 r^{-\k}$ on $\~E_{n-2}$. Thus the claim holds on $\~E_{n_0}$ for $r\in (2^{-n_0},1]$.
Moreover on $\~E_{n_0}\inn E_n$, if $r\in (2^{-n+2},2^{-n+3}]$ we have 
$$ E_x\tau_{x,r}^{-q}\< CE_{\mu_{x_i, r_n}}[\tau_{x_i,2r_n}^{-q}]\leq Cr_n^{-\kappa}\leq C8^{\kappa}r^{-\kappa}\leq\hat C_0r^{-\kappa}.$$
Hence the claim is true.

Next we examine the moment of $\hat C_0$. Let $p>1$ and $p'=\frac{p}{p-1}$. Then

\begin{align*}
\EX \hat C_0^{1/p'}
&\leq (C8^{\k})^{1/p'}+\sum_{n=1}^\8 \EX\left[\left(C\sup_{x\in \Xi_{n+3}} E_{\mu_{x,r_{n+2}}}[\tau_{x,r_{n+1}}^{-q}]\right)^{1/p'};\~E_n\right]\\
&\leq (C8^{\k})^{1/p'}+\sum_{n=1}^\8\left[C\EX\sup_{x\in \Xi_{n+3}} E_{\mu_{x,r_{n+2}}}[\tau_{x,r_{n+1}}^{-q}]\right]^{1/p'}\P[\~E_n]^{1/p}\\
&\leq (C8^{\k})^{1/p'}+\sum_{n=1}^\8\left[C\EX\sum_{x\in \Xi_{n+3}} E_{\mu_{x,r_{n+2}}}[\tau_{x,r_{n+1}}^{-q}]\right]^{1/p'}\P[\~E_n]^{1/p}.\\
\end{align*}
Using the stationarity of GFF and the bounds for $\EX E_{\mu_{x,r_{n+2}}}[\tau_{x,r_{n+1}}^{-q}]$ and $\P[\~E_n]$, we get 
\begin{align*}
\EX \hat C_0^{1/p'}
&\<  (C8^{\k})^{1/p'}+\sum_{n=1}^\8(C2^{2(n+5)})^{1/p'}\left[\EX E_{\mu_{x,r_{n+2}}}[\tau_{x,r_{n+1}}^{-q}]\right]^{1/p'}\P[\~E_n]^{1/p}\\
&\leq (C8^{\k})^{1/p'}+\sum_{n=1}^\8(C2^{2(n+5)})^{1/p'}(C_{\r,q}r_{n+2}^{-\~\xi(q)})^{1/p'}(9C_{\r,q}r_n^{\kappa-\~\xi(q)-2})^{1/p}\\
&=  (C8^{\k})^{1/p'}+C_{\r,q,p,\k}\sum_{n=1}^\8 r_n^{\frac{1}{p}(\k-2)-\frac{2}{p'}-\~\xi(q)}.
\end{align*}
When $\frac{1}{p}(\k-2)-\frac{2}{p'}-\~\xi(q)>0$, i.e. $\k>p(2+\~\xi(q))$, we have $\EX \hat C_0^{1/p'}
<\8$.

Now do the same partition and reasoning for each region $z_k+[-1,1]^2$ where $z_k\in\Z^2$ and we get a sequence of $\hat C_{z_k}$ (defined similar to $\hat C_0$) with the same distribution as $\hat C_0$. For $R\>1$ set $\hat C_R:=\max_{z_k\in\Z^2\cap B_{R+1}}\hat C_{z_k}$. Then 
$$\sup_{|x|\leq R}E_x[\tau_{x,r}^{-q}]\leq \hat C_R r^{-\kappa}\quad  \text{ for all } r\in(0,1].$$
Moreover
\begin{align*}
\EX[\hat C_R^{1/p'}]
\< \sum_{z_k\in \Z^2\cap B_{R+1}}\EX[\hat C_{z_k}^{1/p'}]
\<CR^2\EX [\hat C_0^{1/p'}].
\end{align*}
By \Cref{moment}, we can show that for any $\e>0$ we have $\P$-a.s. $\hat C_R\lesssim_{X,\r,q,p,\k,\e} R^{2p'+\e}$ for $R\geq1$.
\end{proof}

\begin{cor}
Let $q,p,p',\k$ and $\hat C_R$ be as in \Cref{exitmoment}. For any $\3>\kappa/q$ and $\e\in(0,1)$, with $\d_R:=(\e/\hat C_R)^{1/q}$,  $\P$-a.s.
$$\sup_{r\in(0,1]}\sup_{|x|\leq R}P_x[\tau_{x,r}\leq\d_R r^\3]\leq \e.$$
\end{cor}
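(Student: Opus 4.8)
The plan is to derive the bound from a single application of Markov's inequality to the negative moments of the exit times supplied by \Cref{exitmoment}. First I would restrict to the $\P$-full-measure event on which the conclusion of \Cref{exitmoment} holds, so that there is one random constant $\hat C_R>0$ with
$$\sup_{|x|\le R}E_x[\t_{x,r}^{-q}]\le \hat C_R\,r^{-\kappa}\qquad\text{for all }r\in(0,1]$$
simultaneously. In particular $\d_R=(\e/\hat C_R)^{1/q}$ is a well-defined random constant depending only on $\e,q$ and $\hat C_R$, not on $r$ or $x$.

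Next, fix $x$ with $|x|\le R$ and $r\in(0,1]$. Since $\t_{x,r}\ge 0$, for any threshold $a>0$ we have the set identity $\{\t_{x,r}\le a\}=\{\t_{x,r}^{-q}\ge a^{-q}\}$ (with the convention $0^{-q}=+\8$; anyway $\t_{x,r}>0$ $P_x$-a.s. since its $q$-th negative moment is finite), so by Markov's inequality
$$P_x[\t_{x,r}\le a]=P_x\!\left[\t_{x,r}^{-q}\ge a^{-q}\right]\le a^{q}\,E_x[\t_{x,r}^{-q}]\le a^{q}\,\hat C_R\,r^{-\kappa}.$$
Taking $a=\d_R r^{\3}$ and using $\d_R^{q}\hat C_R=\e$ gives
$$P_x[\t_{x,r}\le \d_R r^{\3}]\le \d_R^{q}\hat C_R\,r^{\3 q-\kappa}=\e\,r^{\3 q-\kappa}.$$
Because $\3>\kappa/q$, the exponent $\3 q-\kappa$ is strictly positive, hence $r^{\3 q-\kappa}\le 1$ for every $r\in(0,1]$, and the right-hand side is at most $\e$ uniformly in $x$ and $r$. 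Taking the supremum over $|x|\le R$ and then over $r\in(0,1]$ yields the claim.

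There is essentially no obstacle here: the estimate is a one-line Chebyshev/Markov bound. The only point worth stating explicitly is that \Cref{exitmoment} provides a \emph{single} random constant $\hat C_R$ valid for all $r\in(0,1]$ and all $|x|\le R$ at once, so that the resulting $\d_R$ inherits this uniformity and the suprema can be taken at the very end.
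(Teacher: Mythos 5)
Your proof is correct and is essentially identical to the paper's: both apply Markov's inequality to the negative moment bound of \Cref{exitmoment}, use $\d_R^q\hat C_R=\e$, and note that $r^{\3q-\k}\le1$ since $\3>\k/q$ and $r\in(0,1]$. No further comment is needed.
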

\begin{proof}
	By \Cref{exitmoment} and Markov inequality we have for any $x\in B_R$ and $r\in(0,1]$
		
	$$P_{x}\left[\tau_{x,r} \leq \delta_R r^{\beta}\right]=P_{x}\left[\tau_{x, r}^{-q} \geq\left(\delta_R r^{\beta}\right)^{-q}\right] \leq \hat C_R \delta_R^{q} r^{\beta q-\kappa} \leq \varepsilon.$$
\end{proof}
	
	Now we come to the main result of this subsection, which gives the exit time estimate of large balls.
\begin{prop} \label{exit}
Let $q,p,p',\k$ and $\hat C_R$ be as in \Cref{exitmoment}. Then 
$\P$-a.s. for any $\e\in(0,1/4]$ and any $\3>\kappa/q$ the following holds. Let $R\>1$ and $\d_{2R}:=(\e/\hat C_{2R})^{1/q}$. Then for some $c_{\3,\e}>0$ we have
	for any $t\in(0,R\d_{2R}/(2\beta))$ and $r\in[2{\3t}/{\d_{2R}},R]$,
	$$\sup_{|x|\leq R}P_x[\tau_{x,r}\leq t]\leq \frac{1}{1-2\e}\exp\left(-c_{\3,\e}{(\frac{\d_{2R} r^\3}{t})^{\frac{1}{\3-1}}}\right).$$
\end{prop}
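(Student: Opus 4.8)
The plan is a two-scale chaining argument followed by an exponential Chebyshev (Chernoff) bound. The point is that in order to leave $B_{x,r}$ by time $t$ the Liouville Brownian motion must perform of order $r/\rho$ successive $\rho$-displacements while still inside $B_{2R}$, and by the corollary following \Cref{exitmoment} each of these takes time at least $\d_{2R}\rho^{\beta}$ with conditional probability at least $1-\e$; hence the exit time dominates a sum of order $r/\rho$ increments that are ``slow'' with large probability, and a Chernoff estimate on this sum, optimized over the free scale $\rho\in(0,1]$, yields the stretched-exponential decay.

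Concretely, fix $\rho\in(0,1]$ (to be chosen), put $\sigma_0:=0$ and $\sigma_{i+1}:=\inf\{s>\sigma_i:|Y_s-Y_{\sigma_i}|\geq\rho\}$, and set $n:=\lceil r/\rho\rceil-1$. Because $|Y_{\sigma_i}-Y_{\sigma_{i-1}}|=\rho$ for every $i$ (continuity of $Y$) and $|Y_{\tau_{x,r}}-Y_{\sigma_{i^*}}|<\rho$, where $i^*$ is the largest index with $\sigma_{i^*}\leq\tau_{x,r}$, the triangle inequality gives $r<(i^*+1)\rho$, hence $i^*\geq n$, so $\sigma_n\leq\tau_{x,r}$ on $\{\tau_{x,r}\leq t\}$; and there $Y_{\sigma_0},\dots,Y_{\sigma_{n-1}}\in B_{x,r}\subseteq B_{2R}$ since $|x|\leq R$ and $r\leq R$. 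Therefore $P_x[\tau_{x,r}\leq t]\leq P_x\big[\sigma_n\leq t,\ Y_{\sigma_i}\in B_{2R}\ \text{for all}\ i\leq n-1\big]$. For $y\in B_{2R}$ and $\rho\in(0,1]$, splitting over $\{\tau_{y,\rho}\leq\d_{2R}\rho^{\beta}\}$ and its complement, that corollary gives $E_y[e^{-\lambda\tau_{y,\rho}}]\leq\e+e^{-\lambda\d_{2R}\rho^{\beta}}$ for every $\lambda>0$. Peeling off the increments $\sigma_i-\sigma_{i-1}$ one at a time, using the strong Markov property of $Y$ and the fact that on the event above $Y_{\sigma_{i-1}}\in B_{2R}$, one obtains $E_x[e^{-\lambda\sigma_n};\,Y_{\sigma_i}\in B_{2R}\ \forall i\leq n-1]\leq(\e+e^{-\lambda\d_{2R}\rho^{\beta}})^{n}$, so by Chebyshev and the choice $\lambda:=\d_{2R}^{-1}\rho^{-\beta}\log(1/\e)$ (which makes the bracket equal to $2\e$),
\[ P_x[\tau_{x,r}\leq t]\ \leq\ (2\e)^{n}\exp\Big(\frac{t\log(1/\e)}{\d_{2R}\rho^{\beta}}\Big). \]

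It remains to optimize over $\rho$. Writing $u:=r/\rho$ and $s:=\d_{2R}r^{\beta}/t$, the exponent above is, up to the integer rounding in $n$, equal to $s^{-1}\log(1/\e)\,u^{\beta}-u\log(1/2\e)$, which is minimized at $u\asymp s^{1/(\beta-1)}$ and there equals $-c_{\beta,\e}\,s^{1/(\beta-1)}$ with $c_{\beta,\e}=\tfrac{\beta-1}{\beta}\log\tfrac1{2\e}\,\big(\tfrac{\log(1/2\e)}{\beta\log(1/\e)}\big)^{1/(\beta-1)}>0$ (note $\beta>\kappa/q>2$, so $\beta-1>0$). The resulting optimal $\rho$ lies in $(0,1]$ exactly when $\tfrac{t}{\d_{2R}r}\leq\tfrac{1}{2\beta}$ and $\log(1/\e)\leq 2\log(1/2\e)$, i.e.\ precisely under the hypotheses $r\geq 2\beta t/\d_{2R}$ and $\e\leq1/4$; the complementary regime, in which the optimal $\rho$ would exceed $r$ (equivalently $s$ lies below a threshold depending only on $\beta,\e$), is handled by the trivial bound $P_x[\tau_{x,r}\leq t]\leq 1$, which after possibly shrinking $c_{\beta,\e}$ is still dominated by $\tfrac1{1-2\e}\exp(-c_{\beta,\e}s^{1/(\beta-1)})$; and the prefactor $\tfrac1{1-2\e}$ absorbs the loss incurred by replacing $n=\lceil u\rceil-1$ with $u$ (equivalently, from summing a geometric series with ratio $2\e$ over the number of completed $\rho$-displacements). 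Since that corollary holds $\P$-a.s., one runs this deterministic argument on that event and extends the conclusion to all $\beta>\kappa/q$ and $\e\in(0,1/4]$ by monotonicity along countable dense sets.

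The step I expect to be the main obstacle is the bookkeeping that legitimizes the per-increment estimate: one must carry the indicator $\{Y_{\sigma_i}\in B_{2R}\ \forall i\leq n-1\}$ through the peeling so that at each stage the current centre lies in $B_{2R}$ and the corollary is applicable, and one must check carefully that the admissible ranges $t\in(0,R\d_{2R}/(2\beta))$ and $r\in[2\beta t/\d_{2R},R]$, together with $\e\leq1/4$, are exactly what makes the optimizing scale admissible ($\rho\in(0,1]$, $n\geq1$) — or else place one in the regime where the trivial bound already suffices.
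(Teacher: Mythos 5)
Your proposal is correct and follows essentially the same route as the paper: both chain the exit from $B_{x,r}$ into $\sim r/\rho$ successive exits from balls of radius $\rho\leq 1$ centred along the path (which stay in $B_{2R}$), invoke the corollary to Lemma~3.5 for each increment, and optimize the scale, handling the degenerate small-$s$ regime by the trivial bound with a shrunken constant. The only cosmetic difference is that you run the large-deviation step via a Laplace transform/Chernoff bound where the paper counts ``fast'' increments with a $2^n$ union bound and sums a geometric series over the random number $N$ of steps — the two computations produce the identical estimate $(2\e)^{n}\,\e^{-t/(\d_{2R}\rho^{\3})}$.
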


\begin{proof}
	 For any $x\in B_R$ and $r\in(0,R]$ set $\~r=r/K\leq1$ for some $K>2$ to be determined. Let $\theta$ be the shift operator for $\{Y_t\}_{t\>0}$. Define
	$$\tau_0:=0,\quad r_0:=0,\quad \tau_n:=\tau_{Y_{\tau_{n-1}},\~r}\circ\theta_{\tau_{n-1}}+\tau_{n-1},\quad r_n:=|Y_{\tau_n}-Y_0|,\qquad n\geq1$$
and $N:=\min\{n:r_n> r/2\}$. Note that $\{\t_n\}_{n\>0}$ are stopping times w.r.t. the right-continuous filtration generated by $Y$, because for $n\>1$
\begin{align*}
	\t_n=\t_{n-1}+\inf \{s\>0:|Y_{s+\t _{n-1}}-Y_{\t _{n-1}}|>\~r\}=\inf \{s\>\t_{n-1}:|Y_{s}-Y_{\t _{n-1}}|>\~r\}
\end{align*}
and hence by the continuity of sample paths of $Y$ (and induction on that $\t_{n-1}$ is a stopping time)
$$\{\t_n<t\} = \cup_{s\in [0,t)\cap\Q}\{\tau_{n-1}\<s, |Y_s-Y_{\t _{n-1}}|>\~r\}\in \sigma\{Y_s;s\<t\}.$$


By the strong Markov property we have
\begin{align*}
P_x\left[\t_n\leq t, N=n\right]
&\<P_x\left[\max_{0\<i\<n-1}|Y_{\t_i}|<2R,~\#\{i\in \{1,...,n\}:\t_i-\t_{i-1}\<\d_{2R} \~r^\3\}\>n-t/(\d_{2R}\~r^\3)\right]\\
&\<2^n\e^{n-t/(\d_{2R} \~r^\3)}.	
\end{align*}

Note that $N\> K/2$ since the path of $Y$ needs to exit at least $\lceil K/2 \rceil$ balls of radius $\~r$ before achieving $r_n>r/2$, and that $\t_N\<\t_{Y_0,r}$ by $\~r<r/2$. So by the estimates above we get

	\begin{align*}
 P_x[\t_{x,r}\leq t]
&\leq \sum_{n=\lceil K/2 \rceil}^\8 P_x\left[\t_n\leq t,N=n\right]\\
&\leq \sum_{n=\lceil K/2 \rceil}^\8 2^n\e^{n-t/(\d_{2R} \~r^\3)}\\
&=  \frac{\e^{-t/(\d_{2R} \~r^\3)}(2\e)^{\lceil K/2 \rceil}}{1-2\e}\\
&\< \frac{1}{1-2\e}\exp\left(\frac{K}{2}\log(2\e)-\frac{K^\3 t}{\d_{2R}r^\3}\log \e\right).
\end{align*}
Set $K=(\frac{\d_{2R} r^\3}{2\3t})^{\frac{1}{\3-1}}$. The assertion is obvious if $K\<2$, by choosing $c_{\3,\e}\< \frac{1}{2}(2\3)^{\frac{-1}{\3-1}}\log \frac{1}{1-2\e}$. When $K>2$ and $r\>{2\3t}/{\d_{2R}}$ we have $K\>r$ so that $\~r\<1$, then we get  
	$$ P_x[\t_{x,r}\leq t]\<\frac{1}{1-2\e}\exp\left({-c_{\3,\e}(\d_{2R} r^\3/t)^{\frac{1}{\3-1}}}\right)$$
for $c_{\3,\e}=\frac{1}{2}(\3-2)(2\3)^{-\frac{\3}{\3-1}}\log \frac{1}{\e}>0$.	
\end{proof}

\subsection{Liouville heat kernel upper bounds}

We first establish the on-diagonal bound of the Liouville heat kernel at large distances.

\begin{prop} \label{DUE}
For any $\r\in(0,2)$ and $\2\in(0,\2_2)$ we have $\P$-a.s. for any $R>2$ and $t\in(0,1/2]$,
$$\sup_{|x|,|y|< R}p_t^R(x,y)\lesssim_{X,\r,\2} (\log R)t^{-1}\log t^{-1}$$
where $p_t^R$ is the Liouville heat kernel killed upon exiting $B_R$, and
$$\sup_{|x|,|y|< R}p_t(x,y)\lesssim_{X,\r,\2,q,\k} (\log R)t^{-1}\log t^{-1}$$
where $q,\k$ are from \Cref{exitmoment} and $q>2$.
\end{prop}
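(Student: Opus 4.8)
The plan is to reduce both bounds to an on-diagonal estimate and then, for the killed kernel, to a functional inequality for the Liouville Dirichlet form $(\mathcal E,\mathcal F)$ on $L^2(M)$, which is $\mathcal E(f,f)=\tfrac12\int_\C|\nabla f|^2\,dz$ — i.e. the Dirichlet energy is the ordinary Euclidean one and only the reference measure is $M$. Since the Liouville Brownian motion is $M$-symmetric, Chapman--Kolmogorov together with symmetry gives $p_t(x,y)\le\sqrt{p_t(x,x)\,p_t(y,y)}$, and likewise for the killed kernel, so it suffices to bound the on-diagonal values $p_t^R(x,x)$ and $p_t(x,x)$ for $|x|<R$.

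For the killed kernel I would follow the on-diagonal argument of \cite{maillard2016liouville,andres2016continuity} while tracking the $R$-dependence. Its heart is a Faber--Krahn inequality on $B_R$: there is $c=c(X,\rho,\beta)>0$ such that
\[
\lambda_1^M(\Omega):=\inf_{f\in C_c^\infty(\Omega)}\frac{\tfrac12\int|\nabla f|^2\,dz}{\int f^2\,dM}\ \ge\ \frac{c}{M(\Omega)\,\log\!\big(\bar C_R/M(\Omega)\big)}\qquad\text{for open }\Omega\subseteq B_R,\ M(\Omega)\le \bar C_R/e,
\]
where $\bar C_R$ is the Hölder constant of \Cref{liouvillemeasure}. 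One derives this by combining the borderline two-dimensional Euclidean Sobolev/Moser--Trudinger inequality with the upper Hölder regularity $\sup_{|x|\le R}M(B_{x,r})\le\bar C_R r^\beta$ ($\beta<2$, \Cref{liouvillemeasure}) and the volume bound $M(B_R)\lesssim R^{2+\e}$ (\Cref{LMbound}), the point being to organize the comparison between $M$ and Lebesgue measure so that $\bar C_R$ enters only inside a logarithm — whence, by \Cref{liouvillemeasure}, it contributes only $\log\bar C_R\lesssim\log R$. Plugging this into the standard implication from Faber--Krahn inequalities to on-diagonal heat-kernel upper bounds (Grigor'yan's theorem) yields $p_t^R(x,x)\lesssim v(t)^{-1}$ with $v(t)\log(e\bar C_R/v(t))\asymp t$; solving, $v(t)^{-1}\asymp t^{-1}\log(\bar C_R/t)\lesssim t^{-1}(\log R+\log t^{-1})\lesssim(\log R)\,t^{-1}\log t^{-1}$ for $R>2$, $t\le1/2$. (Only small $M(\Omega)$ enters the argument, since $t\le1/2$ forces $v(t)$ small.) With the Cauchy--Schwarz reduction this gives the first display.

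To remove the killing I would use the strong Markov property at the exit time $\tau_{x,2R}$ of $B_{x,2R}$,
\[
p_t(x,x)=p_t^{B_{x,2R}}(x,x)+E_x\big[\mathbf 1_{\{\tau_{x,2R}<t\}}\,p_{t-\tau_{x,2R}}(Y_{\tau_{x,2R}},x)\big].
\]
Since $B_{x,2R}\subseteq B_{3R}$, domain monotonicity of killed kernels and the first part bound the first term by $p_t^{3R}(x,x)\lesssim(\log R)t^{-1}\log t^{-1}$. In the correction, $Y_{\tau_{x,2R}}$ sits at Euclidean distance $2R$ from $x$, so along any path $u\mapsto p_u(Y_{\tau_{x,2R}},x)$ stays bounded (indeed tends to $0$) as $\tau_{x,2R}\uparrow t$, while $P_x[\tau_{x,2R}<t]$ decays faster than any power of $R$ when $t\le1/2$ by the exit-time estimate \Cref{exit} — applied with the parameters $q,\kappa$ of \Cref{exitmoment}, where $q>2$ is what guarantees \Cref{exit} is usable on the required range of radii and times. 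A short computation (using a crude a priori off-diagonal bound for $p$, or equivalently iterating the above decomposition over dyadic shells and summing the resulting series, each term of which carries a super-polynomially small exit probability) then shows the correction is at most a constant, which for small $t$ is dominated by $(\log R)t^{-1}\log t^{-1}$ and for $t$ bounded away from $0$ is absorbed by joint continuity of the Liouville heat kernel; this proves the second display.

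The main obstacle is the Faber--Krahn inequality: one must transfer the borderline two-dimensional Sobolev embedding to the singular measure $M$ using nothing about $M$ beyond its upper Hölder regularity on $B_R$, and arrange the dyadic comparison so that the polynomially large constant $\bar C_R$ appears only inside a logarithm (otherwise one loses a further power of $\log R$, or worse). A secondary technical point is securing the a priori off-diagonal control needed to make the correction term in the second step harmless without circularity.
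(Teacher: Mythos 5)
Your overall architecture coincides with the paper's: a Faber--Krahn inequality on subsets of $B_R$ whose constant degrades only like $\log R$ (because $\bar C_R$ from \Cref{liouvillemeasure} enters only inside a logarithm), Grigor'yan's implication from Faber--Krahn to an on-diagonal bound for the killed kernel, and then an iteration over growing balls driven by the exit-time estimate \Cref{exit} to pass to the unkilled kernel. For the first step the paper does not go through Moser--Trudinger: it bounds $\|G_U1\|_\infty$ directly from the logarithmic estimate $g_U(x,y)\le\frac1\pi\log\frac{1}{|x-y|}+\frac1\pi\log(3R)$ for the Euclidean Green kernel, \Cref{liouvillemeasure}, and Jensen's inequality, and then uses $\lambda_1(U)^{-1}\le\|G_U1\|_\infty$. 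Your route is plausible, but the Green-operator route is more elementary and yields a Faber--Krahn bound valid for \emph{all} open $U\subseteq B_R$ (the factor $\log(2+1/M(U))$ is bounded below), so one does not need your side remark that only small $M(\Omega)$ matters.

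The genuine gap is in your removal of the killing. In your decomposition the correction term $E_x\bigl[\mathbf 1_{\{\tau_{x,2R}<t\}}\,p_{t-\tau_{x,2R}}(Y_{\tau_{x,2R}},x)\bigr]$ involves the full heat kernel at time $t-\tau_{x,2R}$, which can be arbitrarily small; however small $P_x[\tau_{x,2R}<t]$ is, it multiplies $\sup_{0<u\le t}\sup_{|z-x|=2R}p_u(z,x)$, and controlling that supremum uniformly is precisely the off-diagonal estimate being proved --- the qualitative fact that $p_u(z,x)\to0$ as $u\downarrow0$ for fixed distinct points gives no usable bound. Iterating the same decomposition over dyadic shells does not by itself repair this, since each iterate reproduces a correction at possibly tiny times. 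The paper's fix (Kigami's iteration, \cite[Theorem 4.6 and Lemma 5.6]{grigor2014upper}) contains the one idea your sketch is missing: one first splits the time interval by Chapman--Kolmogorov at an intermediate time in $[t/2,t]$, so that in the resulting self-improving inequality $\sup_{B_k}p^{R}_{t_k}\le\sup_{B_{k+1}}p^{R_{k+1}}_{t_k-t_{k+1}}+\e\,\sup_{B_{k+1}}p^{R}_{t_{k+1}}$ the unknown kernel appears only at times $t_{k+1}\ge t/2$, while the small time $t_k-t_{k+1}=2^{-(k+2)}t$ is carried by the \emph{killed} kernel, whose bound $Q_{2^{-(k+2)}t}(R_{k+1})\le L^{k+2}Q_t(R_0)$ grows geometrically but is beaten by $\e^k$ with $\e=1/(2L)$ supplied by \Cref{exit}. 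Without this time bookkeeping your series does not close. The remaining ingredients of your outline (the role of $q>2$, i.e.\ $q>2p'$ for $p$ large, in making $\d_{2R}R\to\infty$, the choice of a large starting radius $R_0$, and the final limit $p_t^{R_n}\to p_t$) do match the paper.
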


\begin{proof}
To get the bound for $p_t^{R}(x,y)$, we first show a Faber-Krahn-type inequality (an estimate for the smallest eigenvalue of the generator). For a fixed non-empty bounded open set $U\inn B_R$, let $\lambda_1(U)$ be the smallest eigenvalue of the generator $-\mathcal L_U$ of the LBM killed upon leaving $U$ and 
$G_Uf(x)= (-\mathcal L_U)^{-1}f(x)=\int g_U(x,y)f(y)M(dy)$ where $g_U$ is the Green kernel of the standard Brownian motion killed upon leaving $U$. For $g_U$ we have (see e.g. \cite[Lemma 3.37]{morters2010brownian}) for any $x,y\in U\inn B_R$

\begin{align*}
	g_U(x,y) 
	&\< g_{B_{2R}}(x,y)\\
	&= \frac{1}{\pi}\log\frac{1}{|x-y|}+\frac{1}{\pi}\EX_x\left[\log|W_{T_{2R}}-y|\right]\\
	&\< \frac{1}{\pi}\log\frac{1}{|x-y|}+\frac{1}{\pi}\log(3R)
\end{align*}
where $T_{2R}=\inf\{t\>0:W_t\notin B_{2R}\}$.
We have for $\3>0$
\begin{align*}
	\|G_U1\|_\8 
	&= \sup_{x\in U}\int g_U(x,y)M(dy)\\
	&\lesssim \sup_{x\in U}\int_U \left(\log(3R)+\log\frac{1}{|x-y|}\right)M(dy)\\
	&\lesssim M(U)\left[\log(3R)+\sup_{x\in U}\3^{-1}\int_U \log\frac{1}{|x-y|^\3}\frac{M(dy)}{M(U)}\right]\\
	&\lesssim M(U)\left[\log(3R)+\sup_{x\in U}\3^{-1} \log\int_U \frac{1}{|x-y|^\3}\frac{M(dy)}{M(U)}\right],
\end{align*}
where the last inequality follows from Jensen's inequality. By \Cref{liouvillemeasure}, one can get for any $x\in B_R$
\begin{align*}
	\int_U \frac{1}{|x-y|^\3}\frac{M(dy)}{M(U)}
	&\< 1+\sum_{n=1}^\8\int_{U\cap\{2^{-n}<|x-y|\<2^{-n+1}\}} \frac{1}{|x-y|^\3}\frac{M(dy)}{M(U)}\\
	&\<1+\sum_{n=1}^\8 \frac{2^{\3 n} M(\{|x-y|\<2^{-n+1}\})}{M(U)}\\
	&\< 1+\frac{2^\2\bar C_R}{M(U)}\sum_{n=1}^\8 2^{(\3-\2)n},
\end{align*}
where $\2>0$ is from \Cref{liouvillemeasure}. Choose $\3=\2/2$, and the sum $C_\2=2^\2\sum_{n=1}^\8 2^{-\2n/2}$ $(>1)$ is finite. 
Hence 
\begin{align*}
	\|G_U1\|_\8
	&\lesssim M(U)\left[\log(3R)+\log(1+\frac{C_\2 \bar C_R}{M(U)})\right]\\
	&\lesssim M(U)\left[\log(3R)+\log(C_\2\bar C_R)+\log(\frac{1}{C_\2 \bar C_R}+\frac{1}{M(U)})\right]\\
	&\lesssim \left(\log(3R)+\log(C_\2 \bar C_R )\right)M(U)\log\left(2+\frac{1}{M(U)}\right).\\
\end{align*}
By \cite[Lemma 3.2]{grigor2012two} we know $\lambda_1(U)^{-1}\<\|G_U1\|_\8$ and hence
$$\lambda_1(U)\gtrsim\frac{C_9}{M(U) \log \left(2+\frac{1}{M(U)}\right)}$$
where $C_9^{-1}=\log(3R)+\log( C_\2  \bar C_R )\lesssim_{X,\r,\2} \log(R)$ provided $R>2$.

Now we apply the proof of  \cite[Proposition 5.3]{andres2016continuity}. Let $T^{B_R}_t$ be the semigroup operator associated to the heat kernel $p_t^R$. They obtained that 
$$\|T_{t}^{B_R}\|_{L^{1}(B_R) \rightarrow L^{\infty}(B_R)} \leq m(t)$$
for some function $m(t)$. By following their proof, it is straightforward to check that for $t\in(0,1/2]$
$$m(t)\< 4 C_9^{-1} t^{-1}\log t^{-1}\lesssim_{X,\r,\2} (\log R) t^{-1}\log t^{-1}.$$ 
Hence the bound for $p_t^R(x,y)$ follows. 

To extend the bound to $p_t(x,y)$, we can use Kigami's iteration argument \cite[Lemma 5.6]{grigor2014upper}. Let $Q_t(R):=C_Q(\log R)t^{-1}\log t^{-1}$ where $C_Q=C_Q(X,\r,\2)$ is the constant so that 
$$\sup_{|x|,|y|\leq R}p_t^R(x,y)\leq Q_t(R).$$
Notice that for any $s\in[t/2,t],~\lambda\in[1,4]$ we have 
$$Q_s(\lambda R)\<12 Q_t(R).$$ 
Let $L:=12$ and $\e:=\frac{1}{2L}$. Now choose $R_0=R_0(X,\r,q,\k)>0$ large enough so that for any $R\>R_0$,
$$ \d_{2R}R>\3~~~~\text{and}~~~~2\exp\left(-c_{\3,1/4}{(2\d_{2R} R^\3)^{\frac{1}{\3-1}}}\right)\<\e.$$
This can be done because when $1-2p'/q>0$ we have $\d_{2R}R\to\8$ as $R\uparrow\8$.
Then from \Cref{exit} we get
$$\sup_{|x|\<R} P_x[\t_{0,4R}\<t] \< \sup_{|x|\leq R}P_x[\tau_{x,R}\leq t]\leq\e.$$
Define for $k=0,1,2,...$ the sequences
$$t_k=\frac{1}{2}(1+2^{-k})t,~R_k=4^kR_0,~B_k=B_{R_k}.$$
Let $$\sup_{U}p_t^R:=\sup_{x,y\in U}p_t^R(x,y)$$ for any set $U\inn\C$.
We apply the inequality \cite[Theorem 4.6]{grigor2014upper} to get
\begin{align*}
\sup_{B_k}p_{t_k}^R
&\leq \sup_{B_{k+1}} p_{2^{-(k+2)}t}^{R_{k+1}}+\e\sup_{B_{k+1}} p_{t_{k+1}}^{R}\\
&\leq Q_{2^{-(k+2)}t}({R_{k+1}})+\e\sup_{B_{k+1}} p_{t_{k+1}}^{R}\\
&\leq L^{k+2}Q_{t}({R_0})+\e\sup_{B_{k+1}} p_{t_{k+1}}^{R}
\end{align*}
as long as $R_{	k+1}\<R$. Let $n\>1$, set $R=R_n$ and by iteration we get

\begin{align*}\sup_{B_{0}} p_{t}^{R_n} 
& \leq L^{2}\left(1+L \varepsilon+(L \varepsilon)^{2}+\ldots\right) Q_{t}\left(R_{0}\right)+\varepsilon^{n}\sup_{B_{n}} p_{t_{n}}^{R_n} \\ 
&\<2 L^{2} Q_{t}\left(R_{0}\right)+(L\e)^{n}Q_t(R_0).
\end{align*}
Since $\lim_{n\to \8}p_t^{R_n}(x,y)=p_t(x,y)$ for any $x,y\in B_0$ by \cite[Proof of Theorem 5.1 for unbounded $U$]{andres2016continuity}, let $n\to\8$ and we get 
$$\sup_{B_{0}} p_{t}\leq 2 L^{2} Q_{t}\left(R_{0}\right)=288C_Q(\log R_0)t^{-1}\log t^{-1}.$$
Since $R_0=R_0(X,\r,q,\k)$ can be chosen to be any larger value, it follows that
$$\sup_{|x|,|y|\leq R}p_t(x,y)\lesssim_{X,\r,\2,q,\k} (\log R)t^{-1}\log t^{-1}$$
for any $R>2$.
\end{proof}

Now comes the main result of this paper.

\begin{thm}\label{mainthm}
For any $\r\in(0,2)$, $p>1$, $p'=\frac{p}{p-1}$, $q>2p'$, $\2\in(0,\2_2)$ and $\3>\frac{p}{q}(2+\~\xi(q))$, there exist $c_*, C_*>0$ depending on $X,\r,q,p,\3$, such that $\P$-a.s. for any $t\in(0,1/2]$, $R>2$, and $x,y\in B_R$ with $|x-y|>c_*R^{2p'/q}t$, we have
$$p_t(x,y)\lesssim_{X,\r,\2,q,p,\3} (\log R) t^{-1}\log t^{-1}\exp\left({-C_*\left( \frac{|x-y|^\3}{tR^{2p'/q}}\right)^{\frac{1}{\3-1}}}\right).$$
\end{thm}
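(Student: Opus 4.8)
The plan is to deduce the off-diagonal bound from the on-diagonal bound of \Cref{DUE} and the exit-time tail bound of \Cref{exit} by a single Cauchy--Schwarz (Gaffney--Davies type) step, carried out for the heat kernel $p_t^{2R}$ killed on exiting $B_{2R}$ (in the notation of \Cref{DUE}) so that the ``$\log R$'' prefactor is just a constant there, and then passing from $p_t^{2R}$ to $p_t$ exactly as in the second half of the proof of \Cref{DUE}. The point is that the hard chaining has already been done inside \Cref{exit}, so one convolution step suffices: the escape probability it produces is super-exponentially small, which dominates the (merely polynomial in $1/t$) on-diagonal factor.

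First I would fix $\e=1/4$ in \Cref{exit} and use \Cref{exitmoment} to record that $\P$-a.s. $\d_{2R}=(\e/\hat C_{2R})^{1/q}\gtrsim_{X,\r,q,p,\3}R^{-2p'/q}$ for all $R\>1$, the arbitrarily small polynomial loss from \Cref{exitmoment} being absorbed into $c_*,C_*$. For $x,y\in B_R$ put $D:=|x-y|$ and assume $D>c_*R^{2p'/q}t$ with $c_*$ large. From $D\<2R$ one gets $D/2\<R$ and $t<(2/c_*)R^{1-2p'/q}$, so (using $\d_{2R}\gtrsim R^{-2p'/q}$ and $q>2p'$) the radius $D/2$ lies in the admissible window $[2\3 t/\d_{2R},R]$ of \Cref{exit} and, for $c_*$ large enough, $t<R\d_{2R}/(2\3)$. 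Hence \Cref{exit} with ball radius $D/2$ and time $t$ gives, after inserting $\d_{2R}\gtrsim R^{-2p'/q}$, the uniform estimate $\sup_{|z|\<R}P_z[\t_{z,D/2}\<t]\lesssim\exp(-C_*(|x-y|^\3/(tR^{2p'/q}))^{1/(\3-1)})$. Next I would write $p_{2t}^{2R}(x,y)=\int_{B_{2R}}p_t^{2R}(x,z)p_t^{2R}(z,y)\,M(dz)$, split the domain according to whether $|z-x|\<D/2$ (so $|z-y|\>D/2$) or not, and apply Cauchy--Schwarz to each piece: one factor is an on-diagonal $L^2$-norm, $\int_{B_{2R}}p_t^{2R}(x,\cdot)^2\,dM=p_{2t}^{2R}(x,x)$, and the other, ``far'' $L^2$-norm $\int_{\{|w-z|\>D/2\}}p_t^{2R}(w,z)^2\,M(dw)$ is bounded by its sup times $\int_{\{|w-z|\>D/2\}}p_t^{2R}(w,z)\,M(dw)\<P_z[\t_{z,D/2}\<t]$. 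Inserting \Cref{DUE} with radius $2R$ for the sup-factors (using that $s\mapsto s^{-1}\log s^{-1}$ is decreasing and $p_s^{2R}(u,u)$ nonincreasing in $s$ to absorb the time $2t\<1$), and relabelling $2t$ as $t$, yields the claimed bound for $p_t^{2R}(x,y)$.

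Finally I would upgrade from $p_t^{2R}$ to $p_t$ exactly as in the second half of the proof of \Cref{DUE}: run the previous step on the dyadic balls $B_{4^kR_0}$ and feed the resulting killed-kernel estimates into the iteration of \cite[Theorem 4.6]{grigor2014upper}, using \Cref{exit} (with radii $\asymp 4^kR_0$) to make the escape probabilities summable, which is possible precisely because $q>2p'$ forces $\d_{2R}R\to\8$; the factor $|x-y|$ and the stretched-exponential term ride unchanged through the iteration, and letting the number of steps tend to $\8$ (so $p_t^{4^nR_0}(x,y)\to p_t(x,y)$) gives the theorem. The main difficulty is bookkeeping rather than conceptual: one must keep the admissibility conditions $D/2\in[2\3 t/\d_{2R},R]$ and $t<R\d_{2R}/(2\3)$ of \Cref{exit} valid at every dyadic scale while carrying along the harmless $\log R$- and $\d_{2R}$-dependence, and it is exactly to make these conditions scale-uniform that the hypothesis is taken in the form $|x-y|>c_*R^{2p'/q}t$ (together with the a priori bound $|x-y|\<2R$).
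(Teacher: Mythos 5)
Your route is genuinely different from the paper's, and its first two thirds are sound. The paper proves the theorem in a single step by applying Grigor'yan's two-point exit-time inequality \cite[Theorem 10.4]{grigor2004heat} directly to the \emph{global} kernel: with $V=B_{x,r}$, $U=B_{y,r}$, $r=|x-y|/2$, one has
$p_t(x,y)\leq \psi^V(x,t/2)\sup_{t/2\leq s\leq t}\sup_{\partial V}p_s(\cdot,y)+\psi^U(y,t/2)\sup_{t/2\leq s\leq t}\sup_{\partial U}p_s(\cdot,x)$,
where the sups run only over points of $B_{2R}$, so \Cref{DUE} controls them and \Cref{exit} controls the $\psi$'s; no killed kernel and no iteration appear. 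Your Cauchy--Schwarz splitting is a correct substitute for that inequality at the level of $p^{2R}$: it gives $p^{2R}_{2t}(x,y)\lesssim(\log R)\,t^{-1}\log t^{-1}\cdot\max_{z\in\{x,y\}}P_z[\t_{z,|x-y|/4}\leq t]^{1/2}$ (the square root only halves $C_*$), and your check that the radius lies in the admissible window of \Cref{exit} matches the paper's bookkeeping.

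The gap is the passage from $p^{2R}_t$ to $p_t$. The iteration quoted in \Cref{DUE} is an inequality between $\sup_{B_k\times B_k}$-norms of killed kernels, and taking those sups destroys precisely the $|x-y|$-dependence you must preserve; the claim that the stretched exponential ``rides unchanged through the iteration'' does not follow from the sup-form of \cite[Theorem 4.6]{grigor2014upper} as used there. To repair it you would need a pointwise comparison of the type $p_t(x,y)\leq p^{B_{4R}}_{s}(x,y)+(\text{exit factor})\cdot(\text{near-boundary sup of }p)$ at every dyadic scale, together with the observation that each error term is dominated by the target bound because escaping $B_{4^{k+1}R}$ from $B_R$ forces the LBM to travel at least $3\cdot 4^kR\geq|x-y|$, so \Cref{exit} makes each exit factor at most $\exp(-C(|x-y|^\3/(tR^{2p'/q}))^{1/(\3-1)})$ while the accompanying sup is only $(\log 4^kR)t^{-1}\log t^{-1}$ by \Cref{DUE}. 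That verification is doable, but the pointwise comparison it requires is, in substance, exactly \cite[Theorem 10.4]{grigor2004heat} (equivalently \cite[Theorem 5.1]{GRIGORYAN20102613}) --- and once that tool is invoked, one application to $p_t$ itself with $r=|x-y|/2$ renders both your Cauchy--Schwarz step and the iteration unnecessary, which is what the paper does. (Two minor points you share with the paper: the $R^{\e}$ loss in $\hat C_{2R}\lesssim R^{2p'+\e}$ should be absorbed by running \Cref{exitmoment} with a slightly larger $p$ rather than into the constants, and the hypothesis $t<R\d_{2R}/(2\3)$ of \Cref{exit} is handled by noting the statement is vacuous when it fails and $c_*$ is large.)
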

\begin{proof} We apply a result in \cite[Theorem 10.4]{grigor2004heat} (see also \cite[Theorem 5.1]{GRIGORYAN20102613}) that, if $U,V$ are non-empty open subsets of $\C$ with $U\cap V= \O$, then for any $(x,y)\in V\times U$,
	$$p_{t}(x, y) \leq \psi^{V}\left(x, \frac{t}{2}\right) \sup _{t / 2 \leq s \leq t} \sup_{v\in\6 V} p_{s}(v, y)+\psi^{U}\left(y, \frac{t}{2}\right) \sup _{t / 2 \leq s \leq t} \sup_{u\in\6 U} p_{s}(u, x)$$
	where $\psi^V(z,s)=P_z[\t_V\<s]$, and $\t_V$ is the first exit time of Liouville Brownian motion from $V$. Note that we can change \enquote{$\esup$} to \enquote{$\sup$} because $p_t(x,y)$ has been proved to have a $(t,x,y)$-jointly continuous version (see \cite[Theorem 1.1]{andres2016continuity}) and $\psi^V(x,t/2)	$ is continuous in $x\in V$ by \cite[Theorem 5.1(ii)]{andres2016continuity}.
	
	Set $r=|x-y|/2$, $V=B_{x,r}$, $U=B_{y,r}$. Applying \Cref{exit} with $\e=1/4$ and $\k=\frac{1}{2}(q\3+p(2+\~\xi(q)))$ (so that $\3>\kappa/q$) leads to 
	$$\psi^{V}\left(x, \frac{t}{2}\right)\vee\psi^{U}\left(y, \frac{t}{2}\right)	\<	2\exp\left({-c_{\3,1/4}(2\d_{2R} r^\3/t)^{\frac{1}{\3-1}}}\right)$$
	provided $\d_{2R}r>\3t$. In particular by \Cref{exitmoment} it is true if $|x-y|>c_*R^{2p'/q}t$ for some $c_*=c_*(X,\r,q,p,\3)>0$.

	Furthermore, by \Cref{DUE} we have
	$$\sup _{t / 2 \leq s \leq t} \sup_{v\in\6 V} p_{s}(v, y)	\vee	\sup _{t / 2 \leq s \leq t} \sup_{u\in\6 U} p_{s}(u, x)\lesssim_{X,\r,\2,q,\k} (\log R)t^{-1}\log t^{-1}.$$
	Hence
	\begin{align*}
 p_t(x,y)
&\lesssim_{X,\r,\2,q,p,\3} (\log R)t^{-1}\log t^{-1}\exp\left({-\frac{1}{2} c_{\3,1/4}(\d_{2R} |x-y|^\3/t)^{\frac{1}{\3-1}}}\right)\\
&\lesssim_{X,\r,\2,q,p,\3} (\log R)t^{-1}\log t^{-1}\exp\left({-C_*\left( \frac{|x-y|^\3}{tR^{2p'/q}}\right)^{\frac{1}{\3-1}}}\right)
\end{align*}
for some $C_*=C_*(X,\r,q,p,\3)>0$.
\end{proof}

\begin{corollary} \label{Cor1}
	Under the setting of \Cref{mainthm}, set $R=|x|\vee|y|\vee 2$. If in addition $R\<c|x-y|$ for some $c>0$, then
$$p_t(x,y)\lesssim_{X,\r,\2,q,p,\3} (\log R)t^{-1}\log t^{-1}\exp\left({-\~C_*\left( \frac{|x-y|^{\3-2p'/q}}{t}\right)^{\frac{1}{\3-1}}}\right)$$
for some $\~C_*=\~C_*(X,\r,q,p,\3,c)>0$.
\end{corollary}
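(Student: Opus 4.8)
The plan is to read this off directly from \Cref{mainthm} together with the on-diagonal bound \Cref{DUE}, splitting into two regimes according to whether the hypothesis $|x-y|>c_*R^{2p'/q}t$ of \Cref{mainthm} is satisfied. A few preliminary remarks. With $R=|x|\vee|y|\vee2$ one automatically has $x,y\in B_R$, and $x\ne y$ (otherwise $R\<c|x-y|$ would fail). Next, $\3-2p'/q>1$: since $p>1$ forces $q>2p'$ we have $2p'/q<1$, while $\~\xi(q)>0$ and $q>2p'>2$ give $\3>\frac{p}{q}(2+\~\xi(q))>\frac{p}{q}\cdot 2q=2p>2$. Finally, since we are in the setting of \Cref{mainthm} we have $t\in(0,1/2]$, and replacing $R$ by $R+1>2$ costs only bounded factors ($\log(R+1)\lesssim\log R$ and $(R+1)^{2p'/q}\lesssim R^{2p'/q}$ for $R\>2$), so I may apply \Cref{mainthm} and \Cref{DUE} as if at radius $R$.

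In the first regime, $|x-y|>c_*R^{2p'/q}t$ with $c_*$ the constant from \Cref{mainthm}, that theorem gives
$$p_t(x,y)\lesssim_{X,\r,\2,q,p,\3}(\log R)t^{-1}\log t^{-1}\exp\left(-C_*\left(\frac{|x-y|^\3}{tR^{2p'/q}}\right)^{\frac{1}{\3-1}}\right).$$
Using the extra hypothesis $R\<c|x-y|$ one has $R^{2p'/q}\<c^{2p'/q}|x-y|^{2p'/q}$, hence $\frac{|x-y|^\3}{tR^{2p'/q}}\>c^{-2p'/q}\frac{|x-y|^{\3-2p'/q}}{t}$; substituting this into the exponential and setting $\~C_*:=C_*c^{-2p'/(q(\3-1))}$ yields the asserted estimate.

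In the complementary regime, $|x-y|\<c_*R^{2p'/q}t$, I argue that the exponential factor in the claimed bound is bounded below by a positive constant, so the claim reduces to \Cref{DUE}. Combining $|x-y|\<c_*R^{2p'/q}t$ with $R\<c|x-y|$ gives $|x-y|^{1-2p'/q}\<c_*c^{2p'/q}t$; raising to the power $\0:=\frac{\3-2p'/q}{1-2p'/q}>1$ (legitimate as $1-2p'/q>0$) and using $t\<1/2$ shows $\frac{|x-y|^{\3-2p'/q}}{t}\<(c_*c^{2p'/q})^{\0}t^{\0-1}\<(c_*c^{2p'/q})^{\0}$, a constant depending only on $X,\r,q,p,\3,c$. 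Hence $\exp\!\left(-\~C_*(|x-y|^{\3-2p'/q}/t)^{1/(\3-1)}\right)\>c'$ for some $c'>0$. On the other hand \Cref{DUE} gives $p_t(x,y)\lesssim_{X,\r,\2,q,p,\3}(\log R)t^{-1}\log t^{-1}$, and absorbing $1/c'$ into the implied constant completes this regime.

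I do not anticipate any genuine obstacle: the argument is a routine case analysis around \Cref{mainthm}. The only points requiring care are the two elementary inequalities $\3-2p'/q>1$ and $\0>1$, which make the exponential in the second regime controllable, and checking that a single constant $\~C_*$ works in both regimes (in the first it must satisfy $\~C_*\<C_*c^{-2p'/(q(\3-1))}$; in the second any fixed positive value does).
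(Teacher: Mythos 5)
Your argument is correct, and the first regime is exactly the paper's (implicit) one-line proof: since $R\le c|x-y|$, one has $R^{2p'/q}\le c^{2p'/q}|x-y|^{2p'/q}$, which is substituted into the exponent of \Cref{mainthm} to give $\~C_*=C_*c^{-2p'/(q(\3-1))}$. The second regime is harmless but not needed, since ``under the setting of \Cref{mainthm}'' includes the separation hypothesis $|x-y|>c_*R^{2p'/q}t$ (as the subsequent corollary's proof confirms by verifying that condition before invoking this one).
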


\begin{corollary}
	Under the setting of \Cref{mainthm}, for any $t\in(0,1/2]$ and $R_0>1$, there exists $R_1 = R_1(c_*,R_0,p,q)>R_0$ such that for any $y\in B_{R_0}$, $x\notin B_{R_1}$, we have 
	$$p_t(x,y)\lesssim_{X,\r,\2,q,p,\3} \exp\left({-0.5\~C_*\left( \frac{|x-y|^{\3-2p'/q}}{t}\right)^{\frac{1}{\3-1}}}\right)$$
	for some $\~C_*=\~C_*(X,\r,q,p,\3)>0$.
\end{corollary}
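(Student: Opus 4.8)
The plan is to deduce this from \Cref{Cor1} applied with the constant $c=2$, and then to absorb the slowly growing prefactor $(\log R)\,t^{-1}\log t^{-1}$ into a fraction of the stretched–exponential decay supplied by that corollary. First I would fix $t\in(0,1/2]$ and $R_0>1$, take $y\in B_{R_0}$ and $x\notin B_{R_1}$ with a threshold $R_1\>2R_0$ to be chosen at the end, and set $R=|x|\vee|y|\vee2$ as in \Cref{Cor1}. Since $|x|>R_1>R_0\>|y|$ and $|x|>2$, we have $R=|x|$ and $|x-y|\>|x|-|y|>\tfrac12|x|=\tfrac12R$, so the hypothesis $R\<c|x-y|$ of \Cref{Cor1} holds with $c=2$; we also record $|x-y|\>R_1-R_0\>\tfrac12R_1$. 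Before invoking \Cref{Cor1} I would verify the standing hypothesis $|x-y|>c_*R^{2p'/q}t$ of \Cref{mainthm}: using $R\<2|x-y|$ and $t\<1/2$ it reduces to $|x-y|^{1-2p'/q}>\tfrac12c_*2^{2p'/q}$, which, since $q>2p'$ makes the exponent positive, holds once $|x-y|$ — hence $R_1$ — is large enough in terms of $c_*,p,q$.

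Granting that, \Cref{Cor1} (with $c=2$) gives a constant $\~C_*=\~C_*(X,\r,q,p,\3)>0$ with
$$p_t(x,y)\lesssim_{X,\r,\2,q,p,\3}(\log R)\,t^{-1}\log t^{-1}\,\exp\!\big(-\~C_*A\big),\qquad A:=\Big(\tfrac{|x-y|^{\3-2p'/q}}{t}\Big)^{\frac1{\3-1}}.$$
The crux is then the elementary inequality $(\log R)\,t^{-1}\log t^{-1}\<\exp\!\big(\tfrac12\~C_*A\big)$ for all $t\in(0,1/2]$, since combining it with the previous display yields $p_t(x,y)\lesssim\exp(-\tfrac12\~C_*A)$, which is the asserted bound. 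To prove this inequality I would write $A=|x-y|^{s\nu}t^{-\nu}$ with $\nu:=\tfrac1{\3-1}$ and $s:=\3-\tfrac{2p'}q$, and first note that $\3>2$ (hence $\nu\in(0,1)$) and $s>0$; both follow from $\3>\tfrac pq(2+\~\xi(q))$ together with $\~\xi(q)=(2+\tfrac{\r^2}2)q+\tfrac{\r^2}2q^2>2q$ and $q>2p'$. Taking logarithms, using $\log R\<\log(2|x-y|)$ and the elementary bounds $\log t^{-1}\<\tfrac1{e\nu}t^{-\nu}$ and $\log\log t^{-1}\<\log t^{-1}$ valid on $(0,1]$, the claim reduces to $\log\log(2|x-y|)+\tfrac2{e\nu}t^{-\nu}\<\tfrac12\~C_*|x-y|^{s\nu}t^{-\nu}$ for all $t\in(0,1/2]$. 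Since $t^{-\nu}\>2^\nu>1$, this in turn holds as soon as $\tfrac14\~C_*|x-y|^{s\nu}\>\tfrac2{e\nu}$ and $\tfrac14\~C_*|x-y|^{s\nu}\>\log\log(2|x-y|)$, and as $s\nu>0$ both are true once $|x-y|$ — equivalently $R_1$, since $|x-y|\>R_1/2$ — exceeds a constant depending on $\~C_*,s,\nu$. Choosing $R_1$ to satisfy all the requirements above completes the argument.

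The main obstacle is really only bookkeeping: one has to make sure $R_1$ can be taken independent of $t$. This is exactly why the exponent is kept in the form $|x-y|^{s\nu}t^{-\nu}$ with $\nu>0$ — the prefactor contributes only a $\log t^{-1}$ term, which is dominated by $t^{-\nu}$ uniformly as $t\downarrow0$, while for $t$ bounded away from $0$ all quantities are bounded. A secondary subtlety is the dependency chain of $R_1$: through $c_*$ and $\~C_*$ it inherits dependence on $X,\r,\3$ as well, so the ``$R_1(c_*,R_0,p,q)$'' in the statement should be understood as allowing dependence on these derived constants.
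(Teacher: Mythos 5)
Your argument is correct and takes essentially the same route as the paper: both proofs apply \Cref{Cor1} with $c=2$ after choosing $R_1$ large enough that $R=|x|\le 2|x-y|$ and $|x-y|>c_*R^{2p'/q}t$ hold (using $q>2p'$), and then absorb the prefactor $(\log R)\,t^{-1}\log t^{-1}$ into half of the stretched exponential. The only cosmetic difference is that you enlarge $R_1$ further so the absorption holds pointwise (making $R_1$ depend also on $\tilde C_*$, hence on $X,\r,\3$), whereas one can instead absorb it into the implied constant of $\lesssim_{X,\r,\2,q,p,\3}$ — using $|x-y|>1$ and $t^{-1/(\3-1)}\ge 2^{1/(\3-1)}$ — which keeps $R_1=R_1(c_*,R_0,p,q)$ exactly as stated.
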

\begin{proof}
	Choose $R_1$ such that $R_1> (c_*/2+1)^{\frac{q}{q-2p'}}\vee (R_0+1)^{\frac{q}{2p'}}\vee (2R_0)$ (recall that $q-2p'>0$). Then for any $y\in B_{R_0}$, $x\notin B_{R_1}$ we have $2|x-y|\>2(|x|-R_0)\>R$ and $|x-y|>c_*R^{2p'/q}t+1$, where $R=|x|\vee |y|$. The result follows from \Cref{Cor1} with $c=2$ by absorbing $(\log R)t^{-1}\log t^{-1}$ into the exponential.
\end{proof}

\begin{cor}
	Liouville Brownian motion is $C_0$-Feller in the sense that $T_t$ is a positive contraction strongly continuous semigroup on $C_0$, where $C_0$ is the space of continuous functions on $\C$ vanishing at infinity.
\end{cor}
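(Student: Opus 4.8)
The plan is to verify the three defining properties of a strongly continuous positive contraction semigroup on $C_0$ in turn: (i) that $(T_t)_{t\ge 0}$ is a positive contraction semigroup; (ii) that $T_t(C_0)\subseteq C_0$; and (iii) that $\|T_tf-f\|_\infty\to 0$ as $t\downarrow 0$ for every $f\in C_0$. Property (i) is essentially built into the construction of the LBM: writing $T_tf(x)=E_x[f(Y_t)]=\int p_t(x,y)f(y)\,M(dy)$, positivity is immediate from $p_t\ge 0$; the bound $\|T_tf\|_\infty\le\|f\|_\infty$ follows from $\int p_t(x,y)\,M(dy)\le 1$ (in fact $=1$, since $F$ is a.s.\ continuous, strictly increasing and divergent, so the LBM is conservative); and $T_{t+s}=T_tT_s$ is the Chapman--Kolmogorov identity, which holds by the Markov property of \cite{garban2016liouville, Berestycki2013DiffusionIP}. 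All the remaining work is in (ii) and (iii), which I would deduce from the heat kernel bounds already proved, with $\r,p,q,\k,\3,\2$ fixed as in \Cref{mainthm}.

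For (ii): $T_tf$ is continuous for $f\in C_0$ by the strong Feller property of \cite{rhodes2014} (or by weak Feller of \cite{garban2016liouville}, since $C_0\subseteq C_b$), so it only remains to check that $T_tf$ vanishes at infinity. Given $f\in C_0$ and $\e>0$, I would choose $R_0$ with $|f|\le\e$ off $B_{R_0}$ and split $T_tf(x)=\int_{B_{R_0}}p_t(x,y)f(y)\,M(dy)+\int_{B_{R_0}^c}p_t(x,y)f(y)\,M(dy)$. The second term is at most $\e\int_\C p_t(x,y)\,M(dy)\le\e$, and the first is at most $\|f\|_\infty\,M(B_{R_0})\,\sup_{y\in B_{R_0}}p_t(x,y)$, where $M(B_{R_0})<\infty$ a.s.\ (by \Cref{LMbound}); and \Cref{Cor1} applied with $c=2$ shows $\sup_{y\in B_{R_0}}p_t(x,y)\to 0$ as $|x|\to\infty$, since for $|x|$ large one has $|x|\vee|y|\vee 2=|x|\le 2|x-y|$ and the stretched-exponential factor $\exp(-\~C_*(|x-y|^{\3-2p'/q}/t)^{1/(\3-1)})$, whose exponent $\3-2p'/q$ is positive under the hypotheses, overwhelms the $(\log|x|)\,t^{-1}\log t^{-1}$ prefactor. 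Hence $\limsup_{|x|\to\infty}|T_tf(x)|\le\e$ for every $\e$, i.e.\ $T_tf\in C_0$.

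For (iii): the starting observation is that $Y_t\to x$ as $t\downarrow 0$, $P_x$-a.s., because $\bar F$ is a.s.\ continuous with $\bar F(0)=0$ and $W$ has continuous paths, so $Y_t=W_{\bar F(t)}\to W_0=x$; by bounded convergence this gives $T_tf(x)\to f(x)$ pointwise. To upgrade to norm convergence I would fix $\e>0$, take $g\in C_c(\C)$ with $\operatorname{supp}g\subseteq B_r$ and $\|f-g\|_\infty<\e$, and control $\|T_tg-g\|_\infty$ on two regions. On a fixed large ball $B_L$ with $L>2r$, uniform continuity of $g$ together with $\sup_{|x|\le L}P_x[\tau_{x,\d}\le t]\to 0$ as $t\downarrow 0$ (which is \Cref{exit} on the \emph{bounded} region $B_L$, with a fixed small radius $\d$) gives $\sup_{|x|\le L}|T_tg(x)-g(x)|\le 2\e$ for all small $t$. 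Outside $B_L$ we have $g\equiv 0$, so $|T_tg(x)|\le\|g\|_\infty P_x[Y_t\in B_r]\le\|g\|_\infty P_x[\tau_{x,|x|/2}\le t]$; applying \Cref{exit} with $R$ replaced by $|x|$ and radius $|x|/2$, and using $\hat C_{2|x|}\lesssim|x|^{2p'+\eta}$ from \Cref{exitmoment} (so that $\d_{2|x|}\,|x|\gtrsim|x|^{\,1-(2p'+\eta)/q}$ and $\d_{2|x|}(|x|/2)^{\3}\gtrsim|x|^{\,\3-(2p'+\eta)/q}$, both $\to\infty$ as $q>2p'$ and $\3>2p'/q$), one gets $\sup_{|x|>L}P_x[\tau_{x,|x|/2}\le t]<\e/\|g\|_\infty$ once $L$ is large, for all sufficiently small $t$. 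Combining the two regions via the triangle inequality gives $\limsup_{t\downarrow 0}\|T_tf-f\|_\infty\lesssim\e$, and $\e\downarrow 0$ proves (iii); together with the semigroup property this is strong continuity at every $t\ge 0$.

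The hard part is (iii), namely getting $\|T_tf-f\|_\infty\to 0$ \emph{uniformly over all} $x\in\C$: the LBM is not translation invariant, so there is no single base point to reduce to, and near a point where $M$ is thin the process exits small balls quickly; the argument above genuinely relies on the exit-time constants of \Cref{exitmoment}--\Cref{exit} growing only polynomially in $R$ with exponent governed by $2p'/q<1$, so that exit probabilities stay uniformly small on each fixed large ball while $Y$ started far out remains far out with overwhelming probability on short time scales. If one prefers to avoid the quantitative estimate outside $B_L$, a softer route works: pointwise bounded convergence $T_tf\to f$, together with $T_t(C_0)\subseteq C_0$ and positivity/contractivity from (i), already forces strong continuity, because the resolvent $R_1=\int_0^\infty e^{-t}T_t\,dt$ (a bounded operator on $C_0$, the integrand being strongly measurable by separability of $C_0$) has dense range -- any Radon measure annihilating it annihilates $C_0$, via uniqueness of the Laplace transform of the continuous bounded function $t\mapsto\int T_tf\,d\mu$ together with $\int T_tf\,d\mu\to\int f\,d\mu$ -- and $T_h\to\mathrm{id}$ in operator norm on that range as $h\downarrow 0$.
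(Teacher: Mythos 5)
Your proposal is correct, and for the part of the statement that the paper actually argues --- the mapping property $T_t(C_0)\subseteq C_0$ --- you take essentially the same route as the paper's main text: combine contractivity/conservativeness with the large-distance decay of $p_t(x,y)$ from \Cref{mainthm} via \Cref{Cor1} (equivalently, the unnamed corollary just before the statement), splitting $\int p_t(x,y)f(y)\,M(dy)$ over $B_{R_0}$ and its complement and using $M(B_{R_0})<\infty$. Your verification that the exponent $\3-2p'/q$ is positive and that the hypotheses of \Cref{Cor1} are met for $|x|$ large is exactly the intended use of those corollaries. Two remarks on how you differ from the paper. First, the paper's Appendix gives a second, heat-kernel-free proof of $T_t(C_0)\subseteq C_0$, exploiting the rotation invariance in law of the field to control $\EX^X g(x)$ on circles and then passing from an averaged bound to an almost-sure one via convergence of harmonic measure to the uniform measure; you do not use that route, and nothing is lost by not doing so. Second, you prove more than the paper writes down: the paper never addresses strong continuity of $T_t$ on $C_0$, whereas you supply it, both quantitatively (via the exit-time estimates of \Cref{exitmoment}--\Cref{exit}, with the key point that $\d_{2R}R\to\infty$ because $2p'/q<1$, so that exit probabilities are uniformly small on fixed balls and the process started far out stays far out) and by the soft resolvent argument that pointwise convergence plus $T_t(C_0)\subseteq C_0$ plus contractivity already yields strong continuity. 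Either of your two arguments for (iii) closes a gap the paper leaves implicit, and the quantitative one is the more self-contained of the two given the estimates already established.
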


\section{Appendix: A simple proof of the Feller property}
In this section, we give a simple proof of the $C_0$-Feller property without using heat kernel estimates.

We slightly change the notation.
Let $X$ be a whole plane (massive) Gaussian free field defined on some probability space $\Omega$ with the law denoted by $\P^X$, and $B$ be a Brownian motion defined on another probability space. Let $\P^B_x$ be the law of Brownian motion starting from $x\in\C$. By making the product space and setting $\P_x=\P^X\otimes\P^B_x$, then $X$ and $B$ are independent under $\P_x$. We use $\EX_x$, $\EX_x^B$ and $\EX^X$ to mean taking expectation under $\P_x, \P^B_x$ and $\P^X$ respectively. When $x=0$, we drop the subscript. 

Let $F$ denote the PCAF of $B$ whose Revuz measure is the Liouville measure and $\bar F$ be the inverse of $F$, i.e., $\bar F(t)=F^{-1}(t)=\inf\{s\geq 0:F(s)>t\}$. 
We denote the Liouville Brownian motion by $Y_t=B_{\bar F(t)}$, and define the running supremum $Y^*_t:=\max_{s\leq t}|Y_s-Y_0|$.

Let $\D_R$ be the open disk with center 0 and radius $R>0$ and $\bar\a_R(dx)$ be the uniform probability measure on the circle $\6\D_R$ . For a finite set $S$, we use $|S|$ to denote the number of elements in $S$. 

The following discussion is for $\P^X$-a.e. element of $\Omega$. Recall that it has already been proved that $T_t$ maps $C_b$ to $C_b$, where $C_b$ is the set of bounded continuous functions on $\C$.
Fix $t>0$. To show $T_t$ maps $C_0$ to $C_0$, it is enough to show that for any $R>0$,
$$\lim_{x\to\8}\P^B_x[Y_t\in\D_R]=0.$$
Indeed, for any $f\in C_0$ and $\e>0$ there is a continuous function $f_\e\in C_K$ with compact support such that $\|f-f_\e\|_\8<\e$. Choose $R$ large enough so that the support of $f_\e $ is contained in $\D_R$, then $|T_t f(x)|\leq\e+\lVert f_\e\rVert_\8\P^B_x[Y_t\in\D_R]$. Let $x\to\8 $ and then $\e\to 0$, and we get $\lim_{x\to\8}T_t f(x)=0$.

Now fix $R>0$ and $t>0$. Define $g(x)=g(x,X):=\P^B_x[Y^*_t\geq |x|-R]$.

~\\

\begin{lem}\label{rot}
 Let $\0\in\C$ and $|\0|=1$. Then $g(\0 x)$ and $g(x)$ have the same law under $\P^X$. In particular we have $\EX^Xg(x)=\EX^Xg(|x|)$.
\end{lem}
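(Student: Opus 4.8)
The plan is to exploit the rotational invariance of the massive GFF together with the rotational equivariance of Brownian motion and of the time change defining the Liouville Brownian motion. Fix $\0\in\C$ with $|\0|=1$, and let $\tilde X$ be the massive GFF obtained by pulling back $X$ under the rotation $z\mapsto\0 z$; concretely its $n$-regularization is $\tilde X_n(z):=X_n(\0 z)$, and we write $\tilde M$, $\tilde M_n$ for the corresponding Liouville measure and its $n$-regularization. Since the covariance $G_m(x,y)$ and the kernel $k_m(z)$ depend only on $|x-y|$ and $|z|$ respectively, the rotation $z\mapsto\0 z$ is an isometry for all these covariances; hence $\tilde X\overset{d}{=}X$ and $\tilde X_n\overset{d}{=}X_n$ under $\P^X$, and in particular $\EX[X_n(z)^2]=\EX[\tilde X_n(z)^2]=:\sigma_n^2$ does not depend on $z$. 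The first observation is the \emph{deterministic} identity $\tilde M(A)=M(\0 A)$ for every Borel $A$: it holds at each regularization level because a rotation of $\R^2$ has unit Jacobian, so that $\int_A e^{\r\tilde X_n(z)-\frac{\r^2}{2}\sigma_n^2}\,dz=\int_{\0 A}e^{\r X_n(w)-\frac{\r^2}{2}\sigma_n^2}\,dw$, and it passes to the limit $n\to\8$ by vague convergence.

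Next I would transfer this to the level of the process. If $x=0$ the claim is trivial, so assume $x\neq0$. Under $\P^B_x$ let $B$ be the driving Brownian motion, so that $\0 B$ has law $\P^B_{\0 x}$. For each $n$, the approximate PCAF with Revuz measure $M_n$ driven by $\0 B$ equals
$$\tilde F^n(t):=\int_0^t \exp\!\Big(\r X_n(\0 B_s)-\tfrac{\r^2}{2}\sigma_n^2\Big)\,ds=\int_0^t \exp\!\Big(\r \tilde X_n(B_s)-\tfrac{\r^2}{2}\sigma_n^2\Big)\,ds,$$
which --- by the very definition of the approximate PCAF in \Cref{setup} applied to $\tilde X$ --- is \emph{also} the approximate PCAF with Revuz measure $\tilde M_n$ driven by $B$. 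Applying the convergence $\P^B_z[\sup_{s\le T}|F^n(s)-F(s)|>\e]\to0$ from \Cref{setup} once to the Brownian motion $\0 B$ with the measures $M_n$ and once to $B$ with the measures $\tilde M_n$, I get that $\P^X$-a.s.\ the single limit $\tilde F:=\lim_n\tilde F^n$ is simultaneously the PCAF of $\0 B$ with Revuz measure $M$ and the PCAF of $B$ with Revuz measure $\tilde M$; in particular their right-continuous inverses coincide. Consequently the Liouville Brownian motion built from $X$ and started at $\0 x$ can be realized as $\0\tilde Y$, where $\tilde Y$ is the Liouville Brownian motion built from $\tilde X$ and started at $x$. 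Since $z\mapsto\0 z$ is an isometry and $|\0 x|=|x|$, this gives, $\P^X$-a.s.,
$$g(\0 x)=\P^B_{\0 x}\big[Y^*_t\ge|\0 x|-R\big]=\P^B_x\big[(\0\tilde Y)^*_t\ge|x|-R\big]=\P^B_x\big[\tilde Y^*_t\ge|x|-R\big]=g(x,\tilde X),$$
the second equality being the change of variables $B\mapsto\0 B$ (which pushes $\P^B_x$ forward to $\P^B_{\0 x}$) and the third using $|\0\tilde Y_s-\0\tilde Y_0|=|\tilde Y_s-\tilde Y_0|$.

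It remains to conclude. Because $\tilde X\overset{d}{=}X$ under $\P^X$ and $g(\0 x)=g(x,\tilde X)$ as random variables on the GFF probability space, $g(\0 x)$ and $g(x)$ have the same law, which is the first assertion. For the last statement, take $\0=|x|/x$ (so $|\0|=1$ and $\0 x=|x|$, the point of the positive real axis identified with the real number $|x|$), which gives $\EX^X g(x)=\EX^X g(\0 x)=\EX^X g(|x|)$. I expect the genuinely delicate step to be the identification of the two positive continuous additive functionals under the deterministic rotation of the state space; routing it through the explicit approximations $\tilde F^n$ and their uniform-in-time convergence, rather than through an abstract change-of-variables statement for Revuz measures, keeps that step elementary.
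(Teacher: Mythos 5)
Your proposal is correct and follows essentially the same route as the paper: rotate the field, identify the time-change by observing that the approximate PCAFs $F^n$ for $(X,\0 B)$ and $(\tilde X,B)$ coincide pathwise and passing to the limit, then invoke the rotational invariance of the law of the massive GFF. The extra detail you supply (the identity $\tilde M(A)=M(\0 A)$ and the double application of the uniform-in-time convergence) is a slightly more careful write-up of the same argument, not a different one.
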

\begin{proof}
	Let $X^\0=X(\cdot/\0)$ and $B^\0=\0B$. First we show that $\P^X$-a.s., $F_t(X,B)=F_t(X^\0,B^\0)$ $P^B_x$-a.s. for any $x\in\C$. Indeed, we have 
	$$F^n(t)=\int_0^t \exp \left(\gamma X_{n}(B_s)-\frac{\gamma^{2}}{2} \EX^X\left[X_{n}(B_s)^{2}\right]\right) ds=\int_0^t \exp \left(\gamma X_{n}^\0(B_s^\0)-\frac{\gamma^{2}}{2} \EX^X\left[X_{n}^\0(B_s^\0)^{2}\right]\right) ds.$$
	Let $n\to\8$ and by the uniqueness of the limit we have $F_t(X,B)=F_t(X^\0,B^\0)$, and consequently $\bar F_t(X,B)=\bar F_t(X^\0,B^\0).$ 
	
	Notice that 
	\begin{align*}
	g(x,X)&=\P^B_x[Y^*_t\geq |x|-R]\\
		&=\P^B_x[\max_{s\leq t}|B_{\bar F_s(X,B)}-B_0|\geq |x|-R]\\
		&=\P^B_x[\max_{s\leq t}|\0 B_{\bar F_s(X^\0,B^\0)}-\0 B_0|\geq | x|-R]\\
		&=\P^B_x[\max_{s\leq t}|B^\0_{\bar F_s(X^\0,B^\0)}-B^\0_0|\geq | x|-R]\\
		&=\P^{B}_{\0 x}[\max_{s\leq t}|B_{\bar F_s(X^\0,B)}-B_0|\geq |\0 x|-R]\\
		&=g(\0 x,X^\0).
	\end{align*}
Since $X$ and $X^\0$ have the same law under $\P^X$ by the rotation invariance of the covariance function $G_m$, we see that $g(\0 x)$ and $g(x)$ also have the same law under $\P^X$. Now fix $x\in\C$, choose $\0$ such that $\0 x=|x|$, take the expectation, and we get $\EX^Xg(x)=\EX^Xg(|x|)$.
\end{proof}

\begin{lem}\label{lem}

Let $x_n=n\in\C$. Then $\lim_{n\to\8}\EX^X g(x_n)=0$.
\end{lem}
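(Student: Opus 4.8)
The goal is to show $\EX^X g(x_n)\to 0$ where $g(x_n)=\P^B_{x_n}[Y^*_t\ge n-R]$, i.e., the Liouville Brownian motion started from $x_n$ is unlikely to travel Euclidean distance $n-R$ within Liouville time $t$. By \Cref{rot} it suffices to control $\EX^X\P^B_{x_n}[Y^*_t\ge n-R]$ for the real points $x_n=n$. The natural strategy is: \emph{if $Y$ has moved a large Euclidean distance by Liouville time $t$, then the PCAF $F$ accumulated along the underlying Brownian path is small} (at most $t$), yet the Liouville measure should force $F$ to be large whenever the Brownian path sweeps through a long annular region. So I would try to bound $g(x_n)$ by the probability (under $\P^B_{x_n}$, for fixed $X$) that the Brownian motion $B$ exits the annulus $B_{x_n, n-R}\setminus\{x_n\}$ — equivalently reaches $\partial B_{x_n,n-R}$ or enters $\D_R$ — while the additive functional $F$ evaluated at that exit time stays below $t$. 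Writing $\sigma$ for the first time $B$ exits $B_{x_n,n-R}$, we have $\{Y^*_t\ge n-R\}\subseteq\{F(\sigma)\le t\}$ up to the event of reaching $\D_R$, which is itself a distant-target event for Brownian motion started at $n$.

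First I would reduce to a Liouville-mass statement: cover the annulus $\{x: R<|x|<n\}$ (or a sub-corridor of it that $B$ must cross) by order $n$ unit balls, and use that $B$ starting from $x_n=n$ must pass through a chain of $\Theta(n)$ such balls before it can reach $\D_R$ or $\partial B_{x_n,n-R}$ on the far side. Each time $B$ crosses such a unit ball it spends, in Brownian time, an amount that is at least a fixed positive quantity with probability bounded below, and the PCAF increment over that crossing is at least $M(\text{that ball})$ times a comparable factor. This is exactly the mechanism used in \Cref{exit} (the exit-time estimate obtained by chaining exits of small balls), so I would aim to invoke \Cref{exit}, or its method, with the roles reversed: there one exits a ball of radius $r$, here one must traverse a corridor of length $\sim n$, which forces $F$ to exceed $\sum_{j} (\text{mass of }j\text{-th ball})$, and by \Cref{LMbound} these masses are typically of constant order, so the sum is $\gtrsim n$ while we only allow it to be $\le t\le 1/2$. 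Concretely, I expect a bound of the form $g(x_n)\le C\exp(-cn)$ on the event that the random Liouville masses of the relevant unit balls are not too small, i.e., on a $\P^X$-event of probability $\ge 1-\text{(something summable)}$.

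The second ingredient handles the $X$-randomness: I would split $\EX^X g(x_n)$ over a good event $G_n$ on which $M(B_{z,1})\ge c_0$ for every lattice point $z$ in the corridor (or more robustly, on which the lower H\"older-type control of \Cref{LMLB} holds with $R\sim n$, giving $M(B_{z,r})\ge \bar c_n r^{\2}$), and its complement. On $G_n$ the deterministic estimate above gives $g(x_n)\le Ce^{-cn}\to 0$; and $\P^X[G_n^c]\to 0$ because \Cref{LMLB} (or a direct union bound using $\EX[M(B_{z,1})^{p}]$ finiteness for negative $p$, via \Cref{moment}) shows the minimal mass over $\Theta(n)$ unit balls degenerates only polynomially. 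Since $g\le 1$ always, $\EX^X g(x_n)\le Ce^{-cn}+\P^X[G_n^c]\to 0$.

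\textbf{Main obstacle.} The delicate point is making the "corridor-crossing forces $F$ large" step quantitative and uniform: one must show that \emph{every} Brownian path from $x_n=n$ that reaches distance $n-R$ (without the escape being to a direction that avoids the corridor) accumulates PCAF at least of order $n$, and the PCAF is defined only as an a.s. limit of the $F^n$, so one cannot argue pathwise too naively. I would instead argue in expectation/probability: bound $\P^B_{x_n}[\,\sigma \text{ finite and } F(\sigma)\le t\,]$ by summing, over the chain of unit balls, the probability that the PCAF increment across each crossing is small, using the exit-time moment bounds of \Cref{exitmoment}/\Cref{exit} applied at scale $1$ (which give, for each unit ball $B_{z,1}$, that $\P_z[\tau_{z,1}\le \epsilon]$ is small once $\epsilon$ is small relative to $M(B_{z,1})$), and then chaining these $\Theta(n)$ independent-in-the-Markov-sense events to get the exponential decay. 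A secondary technical nuisance is that $B$ could escape to infinity in a direction pointing away from the origin rather than crossing toward $\D_R$; but in that case it still exits $B_{x_n,n-R}$ through a far arc, and the same chaining along a radial corridor of length $\sim n$ in that direction applies, so the argument is symmetric and only the choice of corridor depends on the path.
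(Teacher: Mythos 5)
Your route is workable but it is not the paper's: the paper's proof of \Cref{lem} is a soft, two-line scaling argument. It splits on the event $\{\bar F(t)\le \e |x_n|^2\}$: on that event $Y^*_t=\max_{l\le \bar F(t)}|B_l-B_0|\le \max_{l\le \e n^2}|B_l-B_0|$, so Brownian scaling bounds the first piece by $\P^B[\max_{l\le1}|B_l|\ge (n-R)/(\sqrt\e\, n)]$, while translation invariance of the field's law makes the second piece equal to $\P[\bar F(t)>\e n^2]$, which vanishes as $n\to\8$ simply because $\bar F(t)<\8$ a.s. Letting $n\to\8$ and then $\e\to0$ finishes it. This uses no heat-kernel or exit-time machinery at all, which is the whole point of the Appendix; the price is that it yields no rate of decay. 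Your approach, once cleaned up, amounts to observing that $\{Y^*_t\ge n-R\}$ is \emph{exactly} the event $\{\tau_{x_n,\,n-R}\le t\}$ that the LBM exits the ball $B_{x_n,n-R}$ by time $t$, so \Cref{exit} with $R=n$ and $r=n-R$ applies verbatim: since $\d_{2n}\gtrsim n^{-(2p'+\e)/q}$ and $\3>1>2p'/q$, the bound $\exp(-c(\d_{2n}(n-R)^\3/t)^{1/(\3-1)})$ tends to $0$ stretched-exponentially, and dominated convergence (using $g\le1$) gives the lemma. That buys a quenched, quantitative rate at the cost of importing all of Sections 3.1--3.2; no corridor construction, good-event decomposition, or case analysis on the escape direction is needed.

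Two cautions on the details of your write-up. First, the mechanism you lead with --- that each unit-ball crossing contributes PCAF at least a constant times the Liouville mass of that ball, combined with \Cref{LMLB} --- does not hold pathwise and is not the right quantitative tool: a Brownian crossing can avoid the regions carrying the mass, and lower bounds on $M(B_{z,1})$ alone do not lower-bound the occupation functional. The correct input is the negative-moment bound on exit times (\Cref{exitmoment}), i.e.\ $P_x[\tau_{x,1}\le\d]\le \hat C_{2n}\d^q$, chained via the strong Markov property exactly as in the proof of \Cref{exit}; you do identify this as the fix in your last paragraph, so the gap is repaired, but the \Cref{LMLB}/\Cref{LMbound} detour should be deleted (note also that \Cref{LMbound} is an upper bound on $M$, not a lower bound). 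Second, the reduction ``$\{Y^*_t\ge n-R\}\subseteq\{F(\sigma)\le t\}$ up to the event of reaching $\D_R$'' is an equality with no exceptional event: $Y^*_t\ge n-R$ iff the underlying Brownian path exits $B_{x_n,n-R}$ before time $\bar F(t)$, iff $F(\sigma)\le t$. Keeping this identification exact is what lets you quote \Cref{exit} directly instead of rebuilding its chaining argument.
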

\begin{proof}
For $x\in\C\-\{0\}$ and $\e>0$, set $s=\e|x|^2$, and we get
	\begin{align*}
	g(x)
	&\<\P^B_x\left[Y^*_t\geq |x|-R, \bar F(t)\leq s\right]+\P^B_x\left[\bar F(t)>s\right]\\
	&\leq \P^B\left[\max_{l\leq s}|B_l|\geq |x|-R\right]+\P^B_x\left[\frac{\bar F(t)}{|x|^2}>\e\right]\\
	&= \P^B\left[\max_{l\leq 1}|B_l|\geq \frac{|x|-R}{\sqrt \e |x|}\right]+\P^B_x\left[\frac{\bar F(t)}{|x|^2}>\e\right].\\
	\end{align*}

Thus by the translation invariance of the law of $X$ we have
$$\EX^X g(x_n)\leq \P^B\left[\max_{l\leq 1}|B_l|\geq \frac{1}{2\sqrt \e}\right]+\P[\bar F(t)/n^2>\e]$$
provided $n$ is large enough so that $R/n\<1/2$.
Now let $n\to\8$, then $\e\to0$ and we get the desired result.
\end{proof}

Now we are ready to prove that $T_t$ is Feller.

\begin{thm}
$\P^X$-a.s., $T_t$ maps $C_0$ to $C_0$.	
\end{thm}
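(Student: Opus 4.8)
The plan is to reduce to the almost-sure statement $g(x,X)\to0$ as $|x|\to\infty$ already set up above, and then to upgrade the convergence in mean $\EX^Xg(x)\to0$ — which follows from \Cref{rot} and \Cref{lem} — to an almost-sure statement by producing a polynomial rate and applying the Borel--Cantelli lemma. For the reduction, note that if $Y_t\in\D_R$ and $|x|>R$ then $Y_t^*\ge|Y_0|-|Y_t|\ge|x|-R$, so $\P^B_x[Y_t\in\D_R]\le g(x,X)$; combined with the argument already in the text, it therefore suffices to prove that, for each $R>0$, $\P^X$-a.s. $g(x,X)\to0$ as $|x|\to\infty$, and since $\P^B_x[Y_t\in\D_R]$ is nondecreasing in $R$ it is enough to treat $R\in\N$ and intersect the countably many resulting full-measure events. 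Moreover, the estimate in the proof of \Cref{lem} in fact holds for every $x$ with $|x|$ large, not only along the integers: for all $s>0$,
$$\EX^X g(x)\ \le\ \P^B\Big[\max_{l\le1}|B_l|\ge\frac{|x|-R}{\sqrt s}\Big]+\P\big[\bar F(t)>s\big],$$
and the first term is Gaussian-small in $(|x|-R)/\sqrt s$, hence harmless for any choice $s=s(|x|)=o\big(|x|^2/\log|x|\big)$.

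The crux is a tail bound for the inverse Liouville clock, $\P[\bar F(t)>s]\le C_t\,s^{-q_0}$ for some $q_0>1$. This is equivalent to a lower-deviation (negative-moment) estimate for $F(s)=\int_0^s e^{\gamma X(B_u)-\frac{\gamma^2}{2}\EX^X X(B_u)^2}\,du$: since $\{\bar F(t)>s\}=\{F(s)\le t\}$, Markov's inequality applied to $F(s)^{-q_0}$ reduces matters to $\EX[F(s)^{-q_0}]\lesssim s^{-q_0}$ up to logarithms, which one obtains from the $L^p$-theory and scaling of Gaussian multiplicative chaos recalled in \Cref{setup}; alternatively one can route through \Cref{exit}, observing that on $\{\bar F(t)>s\}$ either the driving Brownian motion stays in a ball of radius $\sqrt s/\log s$ up to time $s$ (a Gaussian-small event) or the \emph{Liouville} exit time of that ball is $\le t$, the latter being super-polynomially small in $s$ by \Cref{exit}. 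Granting such a bound and taking $s(|x|)\asymp|x|^2/(A\log|x|)$ gives $\EX^X g(x)\lesssim_A|x|^{-A}$ (up to logarithmic factors) with $A$ as large as we please; the same argument, after translating the driving Brownian motion to start at $0$ and then using translation invariance of the law of $X$ together with spatial regularity of the shifted clock $F^{v}(s):=\int_0^s e^{\gamma X(v+B_u)-\cdots}\,du$ in $|v|\le1$, yields as well $\EX^X\sup_{|x-z|\le1}g(x,X)\lesssim_A|z|^{-A}$.

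It then remains to run Borel--Cantelli: fixing $\e>0$ and $A>2$, covering the annulus $\{n\le|x|<n+1\}$ by $O(n)$ unit balls centred at a finite set $\Xi_n\subseteq\C$, one has
$$\sum_{n}\P^X\Big[\max_{z\in\Xi_n}\ \sup_{|x-z|\le1}g(x,X)>\e\Big]\ \lesssim\ \frac1\e\sum_n|\Xi_n|\,n^{-A}\ <\ \infty,$$
so $\P^X$-a.s. $\sup_{n\le|x|<n+1}g(x,X)\le\e$ for all large $n$; intersecting over $\e=1/k$ and over $R\in\N$ gives $g(x,X)\to0$, whence $T_t(C_0)\subseteq C_0$, $\P^X$-a.s. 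I expect the main obstacle to be exactly the rate — supplying the tail bound $\P[\bar F(t)>s]\lesssim s^{-q_0}$ with $q_0>1$ (equivalently the negative-moment estimate $\EX[F(s)^{-q_0}]\lesssim s^{-q_0}$, or else invoking \Cref{exit}) and making the lattice-to-$\C$ passage uniform through regularity of $F^{v}(s)$ in $v$; the reduction, the Gaussian Brownian bound, and the covering/Borel--Cantelli bookkeeping are routine.
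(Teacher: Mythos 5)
Your scheme---reduce to the a.s.\ statement $g(x,X)\to0$, extract a polynomial rate for $\EX^X g(x)$, then run Borel--Cantelli over a unit net---is a genuinely different route from the paper's, but it has a gap exactly where you suspect, and the gap is not cosmetic. Everything hinges on $\P[\bar F(t)>s]\lesssim s^{-q_0}$ with $q_0>1$, and neither of your two suggested sources delivers it as stated. The GMC facts recalled in Section 2 control moments of $M(A)$ for bounded $A$ and the scaling $\EX[M(rA)^p]\le C_p r^{\xi_M(p)}$ only for $r\in(0,1]$; they say nothing about the lower tail of the additive functional $F(s)$ as $s\to\infty$, and $\EX[F(s)^{-q_0}]\lesssim s^{-q_0}$ is a new large-scale estimate (one would have to exploit decorrelation of the massive field across the many disjoint regions the Brownian path crosses --- a real argument, not bookkeeping). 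Your alternative route through \Cref{exit} is structurally sound: the inclusion $\{\bar F(t)>s\}\subseteq\{T_{x,\rho}>s\}\cup\{\tau_{x,\rho}\le t\}$ with $\rho=\sqrt s/\log s$ (and $T_{x,\rho}$ the Brownian exit time) is correct. But \Cref{exit} is a \emph{quenched} bound whose constant $\d_{2R}=(\e/\hat C_{2R})^{1/q}$ is random, so it cannot simply be inserted into the annealed quantity $\EX^X g(x)$; you would additionally need to control the upper tail of $\hat C_{2R}$, and \Cref{exitmoment} only supplies its $1/p'$-th moment, so you must check that the resulting decay beats the $|x|^2$ entropy of your net. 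Worse, if you use \Cref{exit} in its quenched form you get directly $g(x,X)\le P_x[\tau_{x,(|x|-R)/2}\le t]\to0$ for a.e.\ $X$, uniformly over all large $x$ at once, which makes the net and Borel--Cantelli superfluous --- but at that point you have rederived the paper's main-text proof (\Cref{mainthm} and its corollaries) rather than given an independent argument. A secondary gap: $\EX^X\sup_{|x-z|\le1}g(x,X)$ is the expectation of a supremum of probabilities taken under uncountably many laws $\P^B_x$; passing the supremum inside requires quantitative spatial regularity of $x\mapsto F^x(s)$ (a lower bound on $\inf_{|x-z|\le1}F^x(s)$) that is nowhere established.

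For contrast, the paper's appendix proof needs no rate at all, which is its whole point. From \Cref{rot} and \Cref{lem} it gets only $\EX^X\int g\,d\bar\a_n=\EX^Xg(n)\to0$, hence a.s.\ smallness of $\int g\,d\bar\a_n$ along a subsequence; this controls $g$ merely on most of the circle $\6\D_n$ with respect to the uniform measure. The step your approach tries to force through rates is instead supplied by harmonic measure: Brownian motion started at $x$ with $|x|$ large first hits $\6\D_n$ with distribution $\mu_{x,n}$ converging to $\bar\a_n$ in total variation (Kelvin transform/Poisson kernel), so the strong Markov property gives $\P^B_x[Y_t\in\D_R]\le\int_{S_n}g\,d\mu_{x,n}+\mu_{x,n}(S_n^c)\le\e+\d+o(1)$. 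This symmetrization converts an $L^1(\bar\a_n)$ bound on $g$ into a pointwise bound at infinity and avoids both the tail estimate for $\bar F(t)$ and the sup-over-unit-balls issue. If you want to salvage your route as an independent proof, the honest task is to prove the negative-moment estimate for $F(s)$; otherwise the rotational-symmetry argument is the shorter path.
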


\begin{proof}
By Lemma \ref{rot} we have 
$$\EX^X\int g(x) \bar\a_n(dx)=\int\EX^X g(n)\bar\a_n(dx)=\EX^Xg(n).$$
By Lemma \ref{lem} we get 
$$\lim_{n\to\8}\EX^X\int g(x) \bar\a_n(dx)=\lim_{n\to\8}\EX^Xg(n)=0.$$
Thus there is a subsequence of $n$ along which $\int g(x) \bar\a_n(dx)\to 0$ $\P^X$-a.s.. Then for any $\e>0$ and $\d>0$, there is some $n>R$ sufficiently large such that $$\bar\a_n(\{x\in\6\D_n:g(x)>\e\})<\d.$$ 

Set $S_n=\{x\in\6\D_n:g(x)\leq\e\}$, then we have $\bar\a_n(S_n^c)\leq\d$.

Let ${\tau_n}=\inf\{s>0:Y_s\in\6\D_n\}$, then $|Y_{\tau_n}|=n$. When $|x|>n>R$, using the strong Markov property (see, e.g., \cite[Proposition 3.4]{grigor2017localized}), we have
\begin{align*}
\P^B_x[Y_t\in\D_R]
&=\P^B_x[Y_t\in\D_R,{\tau_n}<t]\\
&=\int_{\{\t_n<t\}}\P^B_{Y_{\tau_n}(\w)}[Y_{t-{\tau_n(\w)}}\in\D_R]\P_x^B(d\w)\\
&\leq\EX^B_x\left[P^B_{Y_{\tau_n}}[Y^*_t\>n-R]\right]\\
&\leq\EX^B_x\left[g(Y_{\tau_n}), Y_{\tau_n}\in S_n\right]+\P^B_x[ Y_{\tau_n}\in S_n^c]\\
&=\int_{S_n} g(z)\mu_{x,n}(dz)+\mu_{x,n}(S_n^c)
\end{align*}
where $\mu_{x,n}(dz)=\P^B_x[ Y_{\tau_n}\in dz]$, which is the harmonic measure of the Brownian motion viewed at $x$. 

We claim 
$\mu_{x,n}\to \bar\a_n$ in total variation as $x\to\8$. Indeed. Let $\phi(z)=n^2z/|z|^2$. Notice that $\phi$ is analytic on $\C\-\{0\}$ and $\phi|_{\6\D_n}$ is the identity map. We have 
$$\mu_{x,n}(dz)=\P^B_x[ B_{\bar F(\tau_n)}\in dz]=\P^{B'}_{x'}[ {B'}_{{\tau_n}'}\in dz]$$
 where $x'=\phi(x)$, $B'=\phi(B)$ (which is a time-change of a Brownian motion) and ${\tau_n}'=\inf\{s>0:B'_s\in\6\D_n\}$. Thus
 $$\mu_{x,n}(dz)=\mu_{x',n}(dz)=p_n(x',z)\bar\a_n(dz)$$
 where $p_n(x',z)$ is the Poisson kernel on $\6\D_n$.
Hence
 $$\|\mu_{x,n}-\bar\a_n\|_\text{total variation}=\int|p_n(x',z)-1|\bar\a_n(dz)\to0$$
 as $x'\to 0$ ($x\to\8$).
 So we have $\P^X$-a.s.
\begin{align*}
 \limsup_{x\to\8}\P^B_x[Y_t\in\D_R]
&\<\int_{S_n} g(z)\bar\a_n(dz)+\bar\a_n(S_n^c)\\
&\leq \e+\d.
\end{align*}

Let $\e\to 0$ and $\d\to 0$, and combining the discussion at the very beginning of this section, we complete the proof.
\end{proof}

\section{Acknowledgements}
The author sincerely thanks his advisor Professor Zhen-Qing Chen for giving the topic, useful references and helpful discussions, without which this paper could not have been done. 

The author also thanks the anonymous referee for a very careful reading of the manuscript and helpful comments.

\bibliography{LHKlargedistance}

\begin{thebibliography}{10}

\bibitem{allez2013lognormal}
Romain Allez, R{\'e}mi Rhodes, and Vincent Vargas.
\newblock Lognormal $\star$-scale invariant random measures.
\newblock {\em Probability Theory and Related Fields}, 155(3-4):751--788, 2013.

\bibitem{andres2016continuity}
Sebastian Andres and Naotaka Kajino.
\newblock Continuity and estimates of the {L}iouville heat kernel with
  applications to spectral dimensions.
\newblock {\em Probability Theory and Related Fields}, 166(3-4):713--752, 2016.

\bibitem{Berestycki2013DiffusionIP}
Nathana{\"e}l Berestycki.
\newblock Diffusion in planar {L}iouville quantum gravity.
\newblock {\em Annales De L Institut Henri Poincare-probabilites Et
  Statistiques}, 51:947--964, 2013.

\bibitem{chen2012symmetric}
Zhen-Qing Chen and Masatoshi Fukushima.
\newblock {\em Symmetric Markov processes, time change, and boundary theory,
  London Mathematical Society Monographs}, volume~35.
\newblock Princeton University Press, 2012.

\bibitem{ding2019heat}
Jian Ding, Ofer Zeitouni, and Fuxi Zhang.
\newblock Heat kernel for {L}iouville {B}rownian motion and {L}iouville graph
  distance.
\newblock {\em Communications in Mathematical Physics}, 371(2):561--618, 2019.

\bibitem{duplantier2011liouville}
Bertrand Duplantier and Scott Sheffield.
\newblock Liouville quantum gravity and {KPZ}.
\newblock {\em Inventiones mathematicae}, 185(2):333--393, 2011.

\bibitem{fukushima2010dirichlet}
Masatoshi Fukushima, Yoichi Oshima, and Masayoshi Takeda.
\newblock {\em Dirichlet Forms and Symmetric Markov Processes, 2nd edition, de
  Gruyter Studies in Mathematics}, volume~19.
\newblock Walter de Gruyter, 2011.

\bibitem{rhodes2014}
Christophe Garban, R{\'e}mi Rhodes, and Vincent Vargas.
\newblock On the heat kernel and the {D}irichlet form of {L}iouville {B}rownian
  motion.
\newblock {\em Electron. J. Probab.}, 19:25 pp., 2014.

\bibitem{garban2016liouville}
Christophe Garban, R{\'e}mi Rhodes, and Vincent Vargas.
\newblock Liouville {B}rownian motion.
\newblock {\em The Annals of Probability}, 44(4):3076--3110, 2016.

\bibitem{grigor2004heat}
Alexander Grigor'yan.
\newblock Heat kernel upper bounds on fractal spaces.
\newblock {\em preprint}, 2004.

\bibitem{GRIGORYAN20102613}
Alexander Grigor'yan, Jiaxin Hu, and Ka-Sing Lau.
\newblock Comparison inequalities for heat semigroups and heat kernels on
  metric measure spaces.
\newblock {\em Journal of Functional Analysis}, 259(10):2613--2641, 2010.

\bibitem{grigor2017localized}
Alexander Grigor'yan and Naotaka Kajino.
\newblock Localized upper bounds of heat kernels for diffusions via a multiple
  {D}ynkin-{H}unt formula.
\newblock {\em Transactions of the American Mathematical Society},
  369(2):1025--1060, 2017.

\bibitem{grigor2012two}
Alexander Grigor'yan and Andras Telcs.
\newblock Two-sided estimates of heat kernels on metric measure spaces.
\newblock {\em The Annals of Probability}, pages 1212--1284, 2012.

\bibitem{grigor2014upper}
Alexander~Asaturovich Grigor'yan and Jiaxin Hu.
\newblock Upper bounds of heat kernels on doubling spaces.
\newblock {\em Moscow Mathematical Journal}, 14(3):505--563, 2014.

\bibitem{kahane1985chaos}
Jean-Pierre Kahane.
\newblock Sur le chaos multiplicatif.
\newblock {\em Ann. Sci. Math. Qu{\'e}bec}, 9(2):105--150, 1985.

\bibitem{kallenberg2006foundations}
Olav Kallenberg.
\newblock {\em Foundations of modern probability}.
\newblock Springer Science \& Business Media, 1997.

\bibitem{maillard2016liouville}
Pascal Maillard, R{\'e}mi Rhodes, Vincent Vargas, and Ofer Zeitouni.
\newblock Liouville heat kernel: regularity and bounds.
\newblock In {\em Annales de l'Institut Henri Poincar{\'e}, Probabilit{\'e}s et
  Statistiques}, volume~52, pages 1281--1320. Institut Henri Poincar{\'e},
  2016.

\bibitem{morters2010brownian}
Peter M{\"o}rters and Yuval Peres.
\newblock {\em Brownian motion, Cambridge Series in Statistical and
  Probabilistic Mathematics}, volume~30.
\newblock Cambridge University Press, 2010.

\bibitem{polyakov1989quantum}
Alexander~M. Polyakov.
\newblock Quantum geometry of bosonic strings.
\newblock In {\em Supergravities in Diverse Dimensions: Commentary and Reprints
  (In 2 Volumes)}, pages 1197--1200. World Scientific, 1989.

\bibitem{rhodes2014GMC}
R{\'e}mi Rhodes and Vincent Vargas.
\newblock Gaussian multiplicative chaos and applications: A review.
\newblock {\em Probab. Surveys}, 11:315--392, 2014.

\bibitem{sheffield2007gaussian}
Scott Sheffield.
\newblock Gaussian free fields for mathematicians.
\newblock {\em Probability theory and related fields}, 139(3-4):521--541, 2007.

\end{thebibliography}
 \bibliographystyle{plain}

\begin{flushright}
Yang Yu\\
Department of Mathematics\\
University of Washington\\
Seattle, WA 98195\\
USA\\	
\end{flushright}

\end{document}